\documentclass[a4paper,12pt]{amsart} 

\usepackage{amssymb,amsmath}
\usepackage{pb-diagram}
\usepackage{pb-xy}
\usepackage[cmtip,arrow]{xy}

\newtheorem{thm}{Theorem}[section] 
\newtheorem*{thm*}{Theorem}
\newtheorem{prop}[thm]{Proposition}
\newtheorem{cor}[thm]{Corollary} 
\newtheorem{lem}[thm]{Lemma}
  
\theoremstyle{definition} 
 
\newtheorem{rem}[thm]{Remark} 
\newtheorem{exa}[thm]{Example}

\newtheorem*{Ackn}{Acknowledgment}

\numberwithin{equation}{section}

\frenchspacing

\textwidth=14.5cm
\textheight=23cm
\parindent=16pt
\hoffset=-1cm 
\parindent=16pt

\newcommand{\skal}[2]{\langle #1,#2\rangle}
\newcommand{\alg}[1]{\mathfrak{#1}}
\newcommand{\algg}{\mathfrak{g}}

\begin{document}

\title[An integral formula for Riemannian $G$--structures]{An integral formula for Riemannian $G$--structures with applications to almost Hermitian and almost contact structures}
\author{Kamil Niedzia\l omski}
\date{}

\subjclass[2000]{53C10; 53C24; 53C43}
\keywords{Integral formula; intrinsic torsion; almost Hermitian structures, almost contact metric structures}
 
\address{
Department of Mathematics and Computer Science \endgraf
University of \L\'{o}d\'{z} \endgraf
ul. Banacha 22, 90-238 \L\'{o}d\'{z} \endgraf
Poland
}
\email{kamiln@math.uni.lodz.pl}

\begin{abstract}
For a Riemannian $G$--structure, we compute the divergence of the vector field induced by the intrinsic torsion. Applying the Stokes theorem, we obtain the integral formula on a closed oriented Riemannian manifold, which we interpret in certain cases. We focus on almost Hermitian and almost contact metric structures.
\end{abstract}

\maketitle

\section{Introduction}

Equipping an $n$--dimensional manifold $M$ with a Riemannian metric $g$ is equivalent to the reduction of a frame bundle $L(M)$ to the orthogonal frame bundle $O(M)$, i.e. to action of a structure group $O(n)$. Assuming moreover that $M$ is oriented we can consider the bundle $SO(M)$ of oriented orthonormal frames. Existence of additional geometric structure can be considered as a reduction of a structure group $SO(n)$ to a certain subgroup $G$. For example, almost Hermitian structure gives $U(\frac{n}{2})$--structure, almost contact metric structure is just a $U(\frac{n-1}{2})\times 1$--structure, etc.

If $\nabla$ is the Levi--Civita connection of $(M,g)$ we may measure the defect of $\nabla$ to be a $G$--connection. This leads to the notion of an intrinsic torsion. If this $(1,2)$--tensor vanishes (in such case we say that a $G$--structure is integrable) then $\nabla$ is a $G$--connection, which implies that the holonomy group is contained in $G$. We may classify non--integrable geometries by finding the decomposition of the space of all possible intrinsic torsions into irreducible $G$--modules. This approach was initiated by Gray and Hervella for $U(\frac{n}{2})$--structures \cite{GH} and later considered for other structures by many authors \cite{CG,FMC,AS,FG,CS}. Each, so--called, Gray--Hervella class, gives some restrictions on the curvature.

One possible approach to curvature restrictions on compact $G$--structures can be achieved by obtaining integral formulas relating considered objects. This has been firstly done, in a general case, by Bor and Hern\'{a}ndez Lamoneda \cite{BL}. They uses Bochner--type formula for forms being stabilizers of each considered subgroup in $SO(n)$. They obtained integral formulas for $G=U(\frac{n}{2}),SU(\frac{n}{2}), G_2$ and ${\rm Spin}_7$ and continued this approach for $Sp(n)Sp(1)$ in \cite{BL2}. The case $G=U(\frac{n-1}{2})\times 1$ has been studied later in \cite{GMC2} by other authors.

In this article, we show how mentioned formulas can be obtained in a different way. The nice feature of our approach is that the main integral formula 
\begin{equation*}
\frac{1}{2}\int_M s_{\alg{g}^{\bot}}-s^{\rm alt}_{\alg{g}^{\bot}}\,{\rm vol}_M=
\int_M |\chi|^2+|\xi^{\rm alt}|^2-|\xi^{\rm sym}|^2\,{\rm vol}_M.
\end{equation*} 
is valid for any $G$--structure on closed $M$ for compact $G\subset SO(n)$. Let us roughly describe the approach and all used objects in this formula. We consider, so--called, characteristic vector field $\chi=\sum_i \xi_{e_i}e_i$ induced by the intrinsic torsion $\xi$ and calculate its divergence. $\xi^{\rm alt}$ and $\xi^{\rm sym}$ denote the skew--symmetric and symmetric components of $\xi$, $\xi^{\rm alt}_XY=\frac{1}{2}(\xi_XY-\xi_YX)$, $\xi^{\rm sym}_XY=\frac{1}{2}(\xi_XY+\xi_YX)$, whereas, $s_{\alg{g}^{\bot}}$ and $s^{\rm alt}_{\alg{g}^{\bot}}$ are, in a sense, $\alg{g}^{\bot}$ components of a scalar curvature (see the following sections for more details). For some Gray--Hervella classes the characteristic vector field vanishes, and then we get point--wise formula relating an intrinsic torsion with a curvature.

We concentrate on almost Hermitian and almost contact metric structures. In the way described above we recover many well known relations. Let us state some of the consequences of the main itegral formula (the objects used in these statements will be defined in appropriate sections): 
\begin{enumerate}
\item Assume $(M,g,J)$ is closed Hermitian manifold of Gray--Hervella type $\mathcal{W}_4$ such that $s=s^{\ast}$, where $s$ is a scalar curvature and $s^{\ast}$ is a $\ast$--scalar curvature. Then $M$ is K\"ahler (compare \cite{BL}).
\item On a closed $SU(n)$--structure of type $\mathcal{W}_1\oplus\mathcal{W}_5$ we have $\int_M s=5\int_M s^{\ast}$.
\item Let $(M,g,\varphi,\eta,\zeta)$ be an almost contact metric structure with the intrinsic torsion $\xi\in\mathcal{D}_2$. Then
\begin{equation*}
{\rm div}(\nabla_{\zeta}\zeta)=\frac{1}{2}s^{\rm alt}_{\alg{u}(n)^{\bot}}+\frac{1}{2}{\rm Ric}(\zeta,\zeta)-\frac{1}{4}(s-s^{\ast}),
\end{equation*}
where $s$ is a scalar curvature and $s^{\ast}$ is an associated $\ast$--scalar curvature.
\end{enumerate} 

In the end, we consider some examples focusing on (reductive) homogeneous spaces. We show, which is an immediate consequence of the formula for the Levi--Civita connection, that in these examples the characteristic vector field vanishes. Hence, the main divergence formula is point--wise.  

\begin{Ackn}
I wish to thank Ilka Agricola for indication of references \cite{BFGK} and \cite{IA0} and helpful conversations.

\noindent
The author is partially supported by the National Science Center, Poland -- Grant Miniatura 2017/01/X/ST1/01724
\end{Ackn}

\section{Intrinsic torsion}

Let $(M,g)$ be an oriented Riemannian manifold. Denote by $SO(M)$ the bundle of oriented frames over $M$. Let $\nabla$ be the Levi-Civita connection of $g$ and let $\omega$ be the induced connection form. Let $G\subset SO(n)$, where $n=\dim M$, be a closed subgroup. Then on the level of Lie algebras we have the following decomposition
\begin{equation*}
\alg{so}(n)=\algg\oplus\algg^{\bot},\quad {\rm ad}(G)\algg^{\bot}\subset\algg^{\bot},
\end{equation*} 
where the orthogonal complement is taken with respect to the Killing form. Hence $\omega$ decomposes as
\begin{equation*}
\omega=\omega_{\algg}\oplus\omega_{\algg^{\bot}},
\end{equation*}
where $\omega_{\algg}$ is a connection form in the $G$--reduction $P\subset SO(M)$, if such exists, and therefore defines a Riemannian connection $\nabla^G$ on $M$. The difference
\begin{equation*}
\xi_XY=\nabla^G_XY-\nabla_XY,\quad X,Y\in TM,
\end{equation*}
defines a $(1,2)$--tensor called {\it the intrinsic torsion} of a $G$--structure. $\xi$ satisfies some skew--symmetry conditions by the fact that $\xi_X\in\algg^{\bot}(TM)\subset \alg{so}(TM)$ where $\algg^{\bot}(TM)$ is the associated bundle of the form $P\times_{\rm ad(G)}\algg^{\bot}$. In particular,
\begin{equation*}
g(\xi_XY,Z)=-g(Y,\xi_XZ),\quad X,Y,Z\in TM.
\end{equation*} 

By a definition, the intrinsic torsion measures the defect of the Levi--Civita connection to be a $G$--connection. In particular, if $\xi$ vanishes, then the holonomy of $\nabla$ is contained in $G$. The study of the intrinsic torsion and its decomposition into irreducible summands was initiated by Gray and Hervella in the case of $G=U(\frac{n}{2})$ \cite{GH}. Since then, other possible cases, mainly coming from the Berger classification of non--symmetric irreducible holonomy groups, has been considered (see, for example, \cite{CG,FMC,AS,FG,CS}).

\section{An integral formula}

Let $(M,g)$ be an oriented Riemannian manifold with the Levi-Civita connection $\nabla$. Assume $M$ is a $G$--structure, with $G\subset SO(n)$ and let $\xi$ be the associated intrinsic torsion. Define a vector field $\chi=\chi^G$ by
\begin{equation}\label{eQ:characteristicvf}
\chi=\sum_i \xi_{e_i}e_i,
\end{equation}
where $(e_i)$ is any orthonormal basis. We call $\chi$ {\it the characteristic vector field} of a $G$--structure $M$. Notice that if $\xi$ is skew--symmetric with respect to $X$ and $Y$ then $\chi$ vanishes. This is the case, for example, for nearly K\"ahler manifolds (see the following sections). Additionally,
\begin{equation}\label{eq:divergences}
g(\chi,X)=-\sum_i g(e_i,\xi_{e_i}X)={\rm div}X-{\rm div}^G X.
\end{equation}
Thus, vanishing of the characteristic vector field is equivalent to the fact that divergences with respect to $\nabla$ and $\nabla^G$ coincide. Moreover, put
\begin{equation}\label{eq:xialtsym}
\xi^{\rm alt}_XY=\frac{1}{2}(\xi_XY-\xi_YX)\quad\textrm{and}\quad \xi^{\rm sym}_XY=\frac{1}{2}(\xi_XY+\xi_YX). 
\end{equation}

In this section we compute the divergence of $\chi$ with respect to $\nabla$. First, let us recall well--known curvature identities involving the intrinsic torsion \cite{GMC2}:
\begin{equation}\label{eq:curvatureidentities}
\begin{split}
R(X,Y)_{\alg{g}} &=R^G(X,Y)+[\xi_X,\xi_Y]_{\alg{g}},\\
R( X,Y)_{\alg{g}^{\bot}} &=-(\nabla_X\xi)_Y+(\nabla_Y\xi)_X-2[\xi_X,\xi_Y]+[\xi_X,\xi_Y]_{\alg{g}^{\bot}},
\end{split}
\end{equation}
where $R$ and $R^G$ are the curvature tensors of $\nabla$ and $\nabla^G$, respectively. We use the following convention for the curvature $R(X,Y)=[\nabla_X,\nabla_Y]-\nabla_{[X,Y]}$. Thus
\begin{equation}\label{eq:curvatureidentity}
R^G(X,Y)=R(X,Y)+(\nabla_X\xi)_Y-(\nabla_Y\xi)_X+[\xi_X,\xi_Y].
\end{equation}
Denote by $s$ and $s^G$ the scalar curvatures of $R$ and $R^G$, respectively.

\begin{prop}\label{prop:divfor1}
On an oriented $G$--structure $M$ we have
\begin{equation}\label{eq:divformain2}
2{\rm div}\chi=s^G-s+|\chi|^2+|\xi^{\rm alt}|^2-|\xi^{\rm sym}|^2.
\end{equation} 
\end{prop}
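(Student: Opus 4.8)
The plan is to compute $\operatorname{div}\chi$ directly from the definition $\chi=\sum_i\xi_{e_i}e_i$ using a local orthonormal frame $(e_i)$ that is geodesic at the point under consideration, and then to recognize the resulting trace expressions as the scalar-curvature difference plus quadratic terms in $\xi$. First I would write $\operatorname{div}\chi=\sum_j g(\nabla_{e_j}\chi,e_j)=\sum_{i,j}g(\nabla_{e_j}(\xi_{e_i}e_i),e_j)$ and expand the covariant derivative, so that up to the frame being geodesic at the point the only surviving term is $\sum_{i,j}g((\nabla_{e_j}\xi)_{e_i}e_i,e_j)$. The idea is then to symmetrize the pair $(i,j)$ and feed in the second Bianchi-type curvature identity \eqref{eq:curvatureidentity}, which expresses $R^G(X,Y)-R(X,Y)$ in terms of $(\nabla_X\xi)_Y-(\nabla_Y\xi)_X+[\xi_X,\xi_Y]$; contracting that identity over an orthonormal frame will produce $s^G-s$ on one side and a combination of traces of $\nabla\xi$ and of $[\xi_{e_i},\xi_{e_j}]$ on the other.

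More concretely, the second step is to take the trace of \eqref{eq:curvatureidentity}: apply $g(\,\cdot\, e_i,e_j)$ and sum over $i,j$. On the left one gets $s^G-s$ by definition of the scalar curvatures (using the skew-symmetry $g(\xi_X Y,Z)=-g(\xi_X Z,Y)$ to handle the fact that $R^G$ is the curvature of a metric connection). On the right, $\sum_{i,j}g((\nabla_{e_i}\xi)_{e_j}e_i,e_j)-\sum_{i,j}g((\nabla_{e_j}\xi)_{e_i}e_i,e_j)$ should be matched against $2\operatorname{div}\chi$ after accounting for the term $\operatorname{div}^G X$ hidden in \eqref{eq:divergences}; indeed \eqref{eq:divergences} says $g(\chi,X)=\operatorname{div}X-\operatorname{div}^GX$, and differentiating/contracting this is what relates $\sum_{i,j}g((\nabla_{e_j}\xi)_{e_i}e_i,e_j)$ to $\operatorname{div}\chi$ and to a "$\operatorname{div}^G$ of $\chi$"-type term that must be shown to reorganize correctly. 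The remaining piece $\sum_{i,j}g([\xi_{e_i},\xi_{e_j}]e_i,e_j)$ is purely algebraic and, expanding the commutator and using skew-symmetry of each $\xi_X\in\mathfrak{so}(TM)$, should collapse to exactly $|\chi|^2+|\xi^{\mathrm{alt}}|^2-|\xi^{\mathrm{sym}}|^2$; here I expect to use $|\xi^{\mathrm{alt}}|^2$ and $|\xi^{\mathrm{sym}}|^2$ in the form $\frac14\sum_{i,j}|\xi_{e_i}e_j\mp\xi_{e_j}e_i|^2$ and the identity $\sum_{i,j}g(\xi_{e_i}e_j,\xi_{e_j}e_i)$ versus $\sum_{i,j}|\xi_{e_i}e_j|^2$, together with $|\chi|^2=\sum_{i,j}g(\xi_{e_i}e_i,\xi_{e_j}e_j)$.

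I would organize the bookkeeping by reducing everything to three scalar invariants: $A=\sum_{i,j}|\xi_{e_i}e_j|^2$, $B=\sum_{i,j}g(\xi_{e_i}e_j,\xi_{e_j}e_i)$, and $C=|\chi|^2$, verify $|\xi^{\mathrm{alt}}|^2=\tfrac14(A-B)\cdot 2=\tfrac12(A-B)$ type relations (fixing the factor conventions from \eqref{eq:xialtsym}), and then show the commutator trace equals the claimed combination. The step I expect to be the main obstacle is the careful matching of the derivative terms: disentangling $\sum_{i,j}g((\nabla_{e_i}\xi)_{e_j}e_i,e_j)$ and $\sum_{i,j}g((\nabla_{e_j}\xi)_{e_i}e_i,e_j)$ from one another and recognizing their difference as precisely $2\operatorname{div}\chi$ (rather than $\operatorname{div}\chi$ plus a stray $\operatorname{div}^G$ term), since this requires using \eqref{eq:divergences} to trade a $\nabla^G$-divergence for a $\nabla$-divergence and a $|\chi|^2$ contribution, and getting every sign and factor of $2$ consistent with the convention $R(X,Y)=[\nabla_X,\nabla_Y]-\nabla_{[X,Y]}$. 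Once the derivative terms are pinned down, the algebraic identification of the quadratic terms is routine linear algebra in $\mathfrak{so}(n)$.
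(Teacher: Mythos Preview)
Your approach is essentially the same as the paper's: trace the curvature identity \eqref{eq:curvatureidentity} over an orthonormal frame, identify the derivative terms with $2\operatorname{div}\chi$, and compute the commutator trace algebraically. The algebraic handling of the commutator term (via $A$, $B$, $C$) is exactly how the paper proceeds as well.

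The one place where you are making life harder than necessary is the ``main obstacle'' you flag, namely disentangling the two derivative sums and worrying about a stray $\operatorname{div}^G$ term via \eqref{eq:divergences}. None of that is needed. The point is simply that for every $X$ the endomorphism $\xi_X$ lies in $\mathfrak{so}(TM)$, and this skew-symmetry is preserved under covariant differentiation: $g((\nabla_X\xi)_YZ,W)=-g((\nabla_X\xi)_YW,Z)$. Applying this to your two sums,
\[
\sum_{i,j}g((\nabla_{e_i}\xi)_{e_j}e_i,e_j)=-\sum_{i,j}g((\nabla_{e_i}\xi)_{e_j}e_j,e_i),
\]
while the other sum $\sum_{i,j}g((\nabla_{e_j}\xi)_{e_i}e_i,e_j)$ is, after relabeling $i\leftrightarrow j$, exactly $\sum_{i,j}g((\nabla_{e_i}\xi)_{e_j}e_j,e_i)=\operatorname{div}\chi$ (your first paragraph already computes this using a geodesic frame). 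Hence the two derivative traces are negatives of each other, and their difference is $\pm 2\operatorname{div}\chi$ with no $\nabla^G$-divergence entering at all. Once you use this observation, the bookkeeping collapses and the proof is short.
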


\begin{proof}
By \eqref{eq:curvatureidentity} we have
\begin{equation}\label{eq:divfore1}
s^G=s+\sum_{i,j}g((\nabla_{e_i}\xi)_{e_j}e_j,e_i)-\sum_{i,j}g((\nabla_{e_j}\xi)_{e_i}e_j,e_i)+\sum_{i,j}g([\xi_{e_i},\xi_{e_j}]e_j,e_i).
\end{equation}
Notice that $(\nabla_X\xi)_Y$ is skew--symmetric, since $\xi_X$ is skew--symmetric,
\begin{align*}
g((\nabla_X\xi)_YZ,W) &=g(\nabla_X\xi_YZ,W)-g(\xi_{\nabla_XY}Z,W)-g(\xi_Y\nabla_XZ,W)\\
&=-X(Z,\xi_YW)-g(\xi_YZ,\nabla_XW)+g(Z,\xi_{\nabla_XY}W)+(\nabla_XZ,\xi_YW)\\
&=-g(Z,\nabla_X\xi_YW)+g(Z,\xi_Y\nabla_XW)+g(Z,\xi_{\nabla_XY}W)\\
&=-g(Z,(\nabla_X\xi)_YW).
\end{align*}
Thus the first and second sum on the right hand side of \eqref{eq:divfore1} are opposite. Moreover,
\begin{equation}\label{eq:divfore2}
{\rm div}\chi=\sum_{i,j}g(\nabla_{e_i}\xi_{e_j}e_j,e_i)=\sum_{i,j}g((\nabla_{e_i}\xi)_{e_j}e_j,e_i),
\end{equation}
since
\begin{align*}
\sum_{i,j}(g(\xi_{\nabla_{e_i}e_j}e_j,e_i)+g(\xi_{e_j}\nabla_{e_i}e_j,e_i)) &=\sum_{i,j,k}g(\nabla_{e_i}e_j,e_k)g(\xi_{e_k}e_j,e_i)+\sum_{i,j}g(\xi_{e_j}\nabla_{e_i}e_j,e_i)\\
&=-\sum_{i,j,k}g(e_j,\nabla_{e_i}e_k)g(\xi_{e_k}e_j,e_i)+\sum_{i,j}g(\xi_{e_j}\nabla_{e_i}e_j,e_i)\\
&=0.
\end{align*}
Let us compute the last term in \eqref{eq:divfore1},
\begin{equation}\label{eq:divfore3}
\begin{split}
\sum_{i,j}g([\xi_{e_i},\xi_{e_j}]e_j,e_i) &=\sum_{i,j}g(\xi_{e_i}\xi_{e_j}e_j,e_i)-g(\xi_{e_j}\xi_{e_i}e_j,e_i)\\
&=-|\chi|^2+\sum_{i,j}g(\xi_{e_j}e_i,\xi_{e_i}e_j)\\
&=-|\chi|^2+|\xi^{\rm sym}|^2-|\xi^{\rm alt}|^2.
\end{split}
\end{equation}
Substituting \eqref{eq:divfore2} and \eqref{eq:divfore3} into \eqref{eq:divfore1} we get \eqref{eq:divformain2}.
\end{proof}

We will improve above divergence formula a little bit, by getting rid of the component $s^G$ and replacing it by $\alg{g}^{\bot}$--component of $s$ and some additional term, which vanishes in some cases. Namely, denote by $s_{\alg{g}^{\bot}}^{\rm alt}$ the following quantity
\begin{equation*}
s_{\alg{g}^{\bot}}^{\rm alt}=\sum_{i,j}g([\xi_{e_i},\xi_{e_j}]_{\alg{g}^{\bot}}e_j,e_i).
\end{equation*}

\begin{prop}
On an oriented $G$--structure $M$ we have
\begin{equation}\label{eq:divformain}
{\rm div}\chi=\frac{1}{2}s_{\alg{g}^{\bot}}^{\rm alt}-\frac{1}{2}s_{\alg{g}^{\bot}}+|\chi|^2+|\xi^{\rm alt}|^2-|\xi^{\rm sym}|^2.
\end{equation} 
If $M$ is, additionally, closed, then the following integral formula holds
\begin{equation}\label{eq:intformain}
\frac{1}{2}\int_M s_{\alg{g}^{\bot}}-s^{\rm alt}_{\alg{g}^{\bot}}\,{\rm vol}=
\int_M |\chi|^2+|\xi^{\rm alt}|^2-|\xi^{\rm sym}|^2\,{\rm vol}.
\end{equation}
\end{prop}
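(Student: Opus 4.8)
The plan is to start from the divergence formula \eqref{eq:divformain2} of Proposition \ref{prop:divfor1} and rewrite the term $s^G - s$ purely in terms of the curvature $R$ and the intrinsic torsion, so that $s^G$ disappears. The key is to take the trace of the first curvature identity in \eqref{eq:curvatureidentities}, namely $R(X,Y)_{\alg{g}} = R^G(X,Y) + [\xi_X,\xi_Y]_{\alg{g}}$, in the appropriate sense. Concretely, I would contract with an orthonormal basis to form the scalar quantities $s_{\alg{g}}$ and $s_{\alg{g}^{\bot}}$ (the pieces of $s$ obtained by evaluating the $\alg{g}$-- and $\alg{g}^{\bot}$--components of $R(e_i,e_j)$ against $e_j \wedge e_i$), observe that $s = s_{\alg{g}} + s_{\alg{g}^{\bot}}$ since $\alg{so}(n) = \alg{g}\oplus\alg{g}^{\bot}$ is an orthogonal (hence trace--compatible) splitting, and similarly that the full curvature $R^G$ takes values in $\alg{g}$ so that its scalar trace is $s^G = \sum_{i,j} g(R^G(e_i,e_j)e_j,e_i)$.

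Tracing $R(X,Y)_{\alg{g}} = R^G(X,Y) + [\xi_X,\xi_Y]_{\alg{g}}$ gives
\begin{equation*}
s_{\alg{g}} = s^G + \sum_{i,j} g([\xi_{e_i},\xi_{e_j}]_{\alg{g}} e_j, e_i).
\end{equation*}
Now I would split the already computed commutator trace from \eqref{eq:divfore3} according to $\alg{g}\oplus\alg{g}^{\bot}$: writing $[\xi_{e_i},\xi_{e_j}] = [\xi_{e_i},\xi_{e_j}]_{\alg{g}} + [\xi_{e_i},\xi_{e_j}]_{\alg{g}^{\bot}}$ and recalling from \eqref{eq:divfore3} that $\sum_{i,j} g([\xi_{e_i},\xi_{e_j}]e_j,e_i) = -|\chi|^2 + |\xi^{\rm sym}|^2 - |\xi^{\rm alt}|^2$, we can solve for $\sum_{i,j} g([\xi_{e_i},\xi_{e_j}]_{\alg{g}}e_j,e_i)$ in terms of $s^{\rm alt}_{\alg{g}^{\bot}}$, $|\chi|^2$, $|\xi^{\rm sym}|^2$, $|\xi^{\rm alt}|^2$. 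Combining this with the traced identity yields $s^G$ in terms of $s_{\alg{g}}$, $s^{\rm alt}_{\alg{g}^{\bot}}$ and the torsion norms; substituting $s_{\alg{g}} = s - s_{\alg{g}^{\bot}}$ and then plugging the resulting expression for $s^G - s$ back into \eqref{eq:divformain2} should collapse, after the factor of $\tfrac12$, to exactly \eqref{eq:divformain}. The second assertion is then immediate: on closed $M$, $\int_M {\rm div}\chi\,{\rm vol} = 0$ by Stokes' theorem, so integrating \eqref{eq:divformain} and rearranging gives \eqref{eq:intformain}.

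The step I expect to require the most care is the bookkeeping of which commutator trace is ``$\alg{g}$'' and which is ``$\alg{g}^{\bot}$'', and making sure the projections are taken in a way compatible with the trace pairing $A \mapsto \sum_{i,j} g(A(e_i,e_j)e_j,e_i)$ — i.e. that this pairing respects the orthogonal decomposition of $\alg{so}(n)$ so that no cross terms survive. One must also be careful that $[\xi_X,\xi_Y]$ genuinely lies in $\alg{so}(n)$ (it does, as a commutator of skew--symmetric endomorphisms) so that its decomposition into $\alg{g}\oplus\alg{g}^{\bot}$ parts makes sense, and that the ``$\frac12$'' normalization in \eqref{eq:divformain2} versus \eqref{eq:divformain} is tracked correctly through the substitution. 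Everything else is a routine linear rearrangement of the four scalar identities above.
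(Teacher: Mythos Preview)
Your proposal is correct and follows essentially the same route as the paper: trace the identity $R(X,Y)_{\alg{g}}=R^G(X,Y)+[\xi_X,\xi_Y]_{\alg{g}}$ to get $s_{\alg{g}}=s^G+\sum_{i,j}g([\xi_{e_i},\xi_{e_j}]_{\alg{g}}e_j,e_i)$, rewrite the $\alg{g}$--part of the commutator trace using \eqref{eq:divfore3} and the definition of $s^{\rm alt}_{\alg{g}^{\bot}}$, substitute $s_{\alg{g}}=s-s_{\alg{g}^{\bot}}$, and feed the resulting expression for $s^G-s$ into \eqref{eq:divformain2}; the integral formula then follows from Stokes.
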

\begin{proof}
By \eqref{eq:curvatureidentities} and \eqref{eq:divfore3} we have
\begin{align*}
s_{\alg{g}} &=s^G+\sum_{i,j}g([\xi_{e_i},\xi_{e_j}]e_j,e_i)-\sum_{i,j}g([\xi_{e_i},\xi_{e_j}]_{\alg{g}^{\bot}}e_j,e_i)\\
&=s^G-|\chi|^2-|\xi^{\rm alt}|^2+|\xi^{\rm sym}|^2-\sum_{i,j}g([\xi_{e_i},\xi_{e_j}]_{\alg{g}^{\bot}}e_j,e_i).
\end{align*}
Since $s=s_{\alg{g}}+s_{\alg{g}^{\bot}}$, \eqref{eq:divformain2} can be rewritten in the form \eqref{eq:divformain}.
\end{proof}

\begin{rem}\label{rem:invariants}
Notice that elements
\begin{equation*}
|\chi|^2,\quad |\xi^{\rm alt}|^2,\quad |\xi^{\rm sym}|^2
\end{equation*}
are quadratic invariants of the representation of $SO(n)$ in the space of $(1,2)$--tensors with the symmetries of the intrinsic torsion, i.e. the space $T^{\ast}M\otimes\alg{so}(TM)$ \cite{GH}. This implies that $|\xi|^2$ and $|\xi^{\rm alt}|^2-|\xi^{\rm sym}|^2$ are also quadratic invariants. Thus, for an irreducible submodule $\mathcal{U}$ of the representation $T^{\ast}M\otimes\alg{so}(TM)$, since the space of its quadratic invariants in one dimensional \cite{BGM}, then the number
\begin{equation*}
E_{\mathcal{U}}=|\chi^{\mathcal{U}}|^2+|\xi^{\mathcal{U},\rm alt}|^2-|\xi^{\mathcal{U},\rm sym}|^2 
\end{equation*} 
is a constant multiple of $|\xi^{\mathcal{U}}|^2$. Here $\xi^{\mathcal{U}}$ denotes the $\mathcal{U}$--component of $\xi$ with respect to decomposition into irreducible summands. This approach is also valid for any irreducible module $G$--module in the space of possible intrinsic torsions. This kind of approach, was used in \cite{BL} to get integral formulas for many $G$--structures.
\end{rem}

We have an immediate consequence of the formula \eqref{eq:divformain}.
\begin{cor}\label{cor:divfor}
Assume $M$ is an oriented $G$--structure, where $G=U(\frac{n}{2})$, $n$ even, or $G=SO(m)\times SO(n-m)$. If the characteristic vector field vanishes, then
\begin{equation*}
\frac{1}{2}s_{\alg{g}^{\bot}}=|\xi^{\rm alt}|^2-|\xi^{\rm sym}|^2.
\end{equation*}
In particular, if the intrinsic torsion is totally skew--symmetric, then
\begin{equation*}
s_{\alg{g}^{\bot}}=2|\xi|^2\geq 0
\end{equation*}
with the equality if and only if the $G$--structure $M$ is integrable (i.e. $\xi=0$).
\end{cor}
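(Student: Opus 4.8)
The plan is to feed the hypothesis $\chi=0$ into the divergence formula \eqref{eq:divformain} and to show that the correction term $s_{\alg{g}^{\bot}}^{\rm alt}$ drops out for the two groups listed. The first step is the observation that both $(SO(n),U(\frac{n}{2}))$ and $(SO(n),SO(m)\times SO(n-m))$ are \emph{symmetric} pairs: the subalgebra $\algg$ is the fixed-point set of an involution of $\alg{so}(n)$ — conjugation by the complex structure $J$ in the unitary case, conjugation by $\mathrm{diag}(I_m,-I_{n-m})$ in the Grassmannian case — so the reductive splitting $\alg{so}(n)=\algg\oplus\algg^{\bot}$ carries the extra bracket relation $[\algg^{\bot},\algg^{\bot}]\subset\algg$. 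Since the intrinsic torsion satisfies $\xi_X\in\algg^{\bot}(TM)$ for every $X$, this forces $[\xi_X,\xi_Y]\in\algg(TM)$, hence $[\xi_X,\xi_Y]_{\algg^{\bot}}=0$ pointwise, and therefore
\begin{equation*}
s_{\alg{g}^{\bot}}^{\rm alt}=\sum_{i,j}g([\xi_{e_i},\xi_{e_j}]_{\alg{g}^{\bot}}e_j,e_i)=0.
\end{equation*}

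With $s_{\alg{g}^{\bot}}^{\rm alt}=0$ established, I would put $\chi=0$ in \eqref{eq:divformain} — so that ${\rm div}\,\chi=0$ and $|\chi|^2=0$ as well — and read off $0=-\frac{1}{2} s_{\alg{g}^{\bot}}+|\xi^{\rm alt}|^2-|\xi^{\rm sym}|^2$, which is the first assertion. For the special case in which the intrinsic torsion is totally skew-symmetric, I would note that then $\xi_XY=-\xi_YX$, so $\xi^{\rm sym}=0$ and $\xi^{\rm alt}=\xi$; moreover total skew-symmetry already implies $\chi=0$, since $g(\chi,X)=\sum_i g(\xi_{e_i}e_i,X)$ and each summand is skew under the interchange of its first two slots, hence zero. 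Thus the first part applies and gives $\frac{1}{2} s_{\alg{g}^{\bot}}=|\xi|^2\geq0$, i.e. $s_{\alg{g}^{\bot}}=2|\xi|^2$. Equality forces $|\xi|^2\equiv0$, i.e. $\xi=0$, which is precisely the integrability of the $G$-structure.

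The only substantive ingredient is the symmetric-pair relation $[\algg^{\bot},\algg^{\bot}]\subset\algg$; the rest is direct substitution into a formula already proved. So the main (and essentially only) thing to verify carefully is that $U(\frac{n}{2})$ and $SO(m)\times SO(n-m)$ yield symmetric, not merely reductive, decompositions of $\alg{so}(n)$ — which is classical — and, if one wants a self-contained argument, to check on blocks (respectively on the $\pm$-eigenspaces of $J$) that the bracket of two elements of $\algg^{\bot}$ lands back in $\algg$.
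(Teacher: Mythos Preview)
Your proof is correct and follows essentially the same route as the paper: the key observation is that $[\algg^{\bot},\algg^{\bot}]\subset\algg$ for both groups, which kills $s^{\rm alt}_{\alg{g}^{\bot}}$, after which everything is direct substitution into \eqref{eq:divformain}. You supply more justification (the symmetric-pair involutions and the explicit verification that total skew-symmetry forces $\chi=0$) than the paper's one-line proof, but the argument is the same.
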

\begin{proof}
For the listed choices of $G$ we have $[\alg{g}^{\bot},\alg{g}^{\bot}]\subset \alg{g}$, thus $s^{\rm alt}_{\alg{g}^{\bot}}$ vanishes.
\end{proof}

The consequences of the integral formula will be presented in the following section for certain choices of $G$.

\section{Applications to certain Riemannian $G$--structures}

In this section we rewrite formulae \eqref{eq:divformain} and \eqref{eq:intformain} for certain $G$--structures. We also give some applications of these relations. We will show that obtained formulas are consistent with the Bochner type formulae obtained, using representation theory, in \cite{BL}.

\subsection{Almost product structures}

We show that the divergence and integral formulae obtained in the previous section agree with the Walczak formulas \cite{PW}. Since this integral formula has found many applications, we will only concentrate on deriving it from \eqref{eq:intformain} and state its one corollary, which will be needed later.

Let $(M,g)$ be an oriented Riemannian manifold, with two complementary orthogonal oriented distributions $\mathcal{D}$ and $\mathcal{D}^{\bot}$, i.e. $TM=\mathcal{D}\oplus\mathcal{D}^{\bot}$. Thus the bundle of oriented orthonormal frames $SO(M)$ has a reduction to a subgroup $SO(m)\times SO(n-m)\subset SO(n)$, where $m=\dim\mathcal{D}$. On the level of Lie algebras 
\begin{equation*}
\alg{so}(n)=(\alg{so}(m)\oplus\alg{so}(n-m))\oplus\alg{m},
\end{equation*}
where
\begin{equation*}
\alg{m}=(\alg{so}(m)\oplus\alg{so}(n-m))^{\bot}=\left\{
\left(\begin{array}{cc} 0 & A \\ -A^{\top} & 0\end{array}\right)\right\}
\end{equation*}
and $A$ is $m\times(n-m)$ matrix. Let $\nabla$ be the Levi--Civita connection of $g$. Since the orthogonal projection to $\alg{m}$ is just a restriction to non--diagonal blocks, it follows that the intrinsic torsion equals
\begin{equation*}
\xi_XY=-(\nabla_XY^{\top})^{\bot}-(\nabla_XY^{\bot})^{\top},
\end{equation*}
where $Y^{\top}$ and $Y^{\bot}$ denotes the components of $Y$ in $\mathcal{D}$ and $\mathcal{D}^{\bot}$. Notice that $\xi$ is made of shape operators and fundamental forms of distributions $\mathcal{D}$ and $\mathcal{D}^{\bot}$. Recall, that the second fundamental form, for example, of $\mathcal{D}$ is a $(1,2)$--symmetric tensor $B=B^{\mathcal{D}}$ of the form
\begin{equation*}
B(X,Y)=\frac{1}{2}(\nabla_XY+\nabla_YX)^{\bot},\quad X,Y\in \mathcal{D}.
\end{equation*}
Additionally, we will use integrability tensor $T=T^{\mathcal{D}}$ being just
\begin{equation*}
T(X,Y)=\frac{1}{2}[X,Y]^{\bot},\quad X,Y\in \mathcal{D}.
\end{equation*}
Notice that $B(X,Y)+T(X,Y)=(\nabla_XY)^{\bot}$, hence $B$ and $T$ are symmetrization and alternation of (a minus of) a part of the intrinsic torsion reduced to $\mathcal{D}$. 

For an orthonormal basis $(e_i)$ adapted to the decomposition $\mathcal{D}\oplus\mathcal{D}^{\bot}$, denote by $e_A$ elements of $(e_i)$ in $\mathcal{D}$ and by $e_{\alpha}$ elements of $(e_i)$ in $\mathcal{D}^{\bot}$. The characteristic vector field $\chi$ equals
\begin{equation}\label{eq:charvfdistributions}
\chi=-\sum_A (\nabla_{e_A}e_A)^{\bot}-\sum_{\alpha}(e_{\alpha}e_{\alpha})^{\top}=-H-H^{\bot},
\end{equation} 
where $H$ and $H^{\bot}$ are mean curvature vectors of $\mathcal{D}$ and $\mathcal{D}^{\bot}$ respectively.

We may now state and show that the Walczak formula \cite{PW} is an integral formula \eqref{eq:intformain} for $G=SO(m)\times SO(n-m)$.

\begin{prop}[\cite{PW}]\label{prop:Walczakformula}
On a closed Riemannian manifold equipped with a pair of complementary orthogonal and oriented distributions the following Walczak integral formula holds
\begin{equation}\label{eq:Walczakfor}
\int_M s_{\rm mix}=\int_M |H|^2+|H^{\bot}|^2+|T|^2+|T^{\bot}|^2-|B|^2-|B^{\bot}|^2,
\end{equation}
where $s_{\rm mix}$ is a mixed scalar curvature defined by
\begin{equation*}
s_{\rm mix}=\sum_{A,\alpha} g(R(e_A,e_{\alpha})e_{\alpha},e_A)
\end{equation*} 
\end{prop}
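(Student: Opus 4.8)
The plan is to derive the Walczak formula \eqref{eq:Walczakfor} directly from the general integral formula \eqref{eq:intformain} applied to $G=SO(m)\times SO(n-m)$, by identifying each term in \eqref{eq:intformain} with its geometric counterpart. First I would unravel $|\xi|^2$, $|\xi^{\rm alt}|^2$ and $|\xi^{\rm sym}|^2$ in terms of the tensors $B$, $T$, $B^{\bot}$, $T^{\bot}$. Using $\xi_XY=-(\nabla_XY^{\top})^{\bot}-(\nabla_XY^{\bot})^{\top}$, restricting $X,Y$ to an adapted frame splits $\xi$ into a ``mixed'' collection of blocks; the $(e_A,e_B)\mapsto -(\nabla_{e_A}e_B)^{\bot}$ part symmetrizes/alternates to $-B$ and $-T$, and likewise $(e_\alpha,e_\beta)\mapsto -(\nabla_{e_\alpha}e_\beta)^{\top}$ gives $-B^{\bot}$ and $-T^{\bot}$. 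Careful bookkeeping of which index pairs contribute should give $|\xi^{\rm sym}|^2 = |B|^2+|B^{\bot}|^2+(\text{shape-operator terms})$ and $|\xi^{\rm alt}|^2 = |T|^2+|T^{\bot}|^2+(\text{the same shape-operator terms})$, so that the cross terms cancel in the difference $|\xi^{\rm alt}|^2-|\xi^{\rm sym}|^2$, leaving exactly $|T|^2+|T^{\bot}|^2-|B|^2-|B^{\bot}|^2$.

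Next I would handle the characteristic vector field: by \eqref{eq:charvfdistributions} we already have $\chi=-H-H^{\bot}$, and since $H\in\mathcal D^{\bot}$ while $H^{\bot}\in\mathcal D$ are orthogonal, $|\chi|^2=|H|^2+|H^{\bot}|^2$. That accounts for the entire right-hand side of \eqref{eq:Walczakfor}. It then remains to identify the left-hand side, i.e.\ to show $\tfrac12 s_{\alg g^{\bot}} = s_{\rm mix}$ and $s^{\rm alt}_{\alg g^{\bot}}=0$. The vanishing of $s^{\rm alt}_{\alg g^{\bot}}$ is immediate from Corollary~\ref{cor:divfor}, since $[\alg m,\alg m]\subset\alg{so}(m)\oplus\alg{so}(n-m)=\alg g$ for this $G$. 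For the curvature term, recall $s_{\alg g^{\bot}}=\sum_{i,j}g(R(e_i,e_j)_{\alg g^{\bot}}e_j,e_i)$; because projection onto $\alg m=\alg g^{\bot}$ keeps only the off-diagonal blocks, $g(R(e_i,e_j)_{\alg g^{\bot}}e_j,e_i)$ is nonzero only when exactly one of $e_i,e_j$ lies in $\mathcal D$ and the other in $\mathcal D^{\bot}$, so $s_{\alg g^{\bot}}=2\sum_{A,\alpha}g(R(e_A,e_\alpha)e_\alpha,e_A)=2s_{\rm mix}$, the factor $2$ coming from summing over ordered pairs.

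The main obstacle I expect is the combinatorial identification of the norm terms: one must be scrupulous about the $(1,2)$-tensor conventions, the factors of $\tfrac12$ built into the definitions of $\xi^{\rm alt}$, $\xi^{\rm sym}$, $B$ and $T$, and the fact that $\xi_XY$ only sees the ``off-diagonal'' coupling between $\mathcal D$ and $\mathcal D^{\bot}$, so that, for instance, the purely tangential part of $B$ — namely $(\nabla_XY)^{\top}$ for $X,Y\in\mathcal D$ — does \emph{not} appear in $\xi$ at all, while the normal component $(\nabla_XY)^{\bot}=B(X,Y)+T(X,Y)$ does. Writing out $|\xi_{e_A}e_B|^2$, $|\xi_{e_A}e_\alpha|^2$ and $|\xi_{e_\alpha}e_\beta|^2$ separately and resumming will show that all shape-operator-type cross contributions enter $|\xi^{\rm alt}|^2$ and $|\xi^{\rm sym}|^2$ with equal weight and hence drop out of the difference. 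Once this is checked, substituting everything into \eqref{eq:intformain} and multiplying through by $2$ yields \eqref{eq:Walczakfor} verbatim.
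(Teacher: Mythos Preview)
Your proposal is correct and follows essentially the same route as the paper: compute $|\xi^{\rm alt}|^2$ and $|\xi^{\rm sym}|^2$ block-by-block in an adapted frame, observe that the mixed-index (shape-operator) contributions cancel in the difference, identify $s_{\alg m}=2s_{\rm mix}$ and $s^{\rm alt}_{\alg m}=0$ via $[\alg m,\alg m]\subset\alg g$, and substitute. The only cosmetic difference is that the paper works with the pointwise divergence formula \eqref{eq:divformain} to obtain the Walczak divergence identity \eqref{eq:Walczakdiv} first and then integrates, while you go straight through \eqref{eq:intformain}.
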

\begin{proof}
 Since
\begin{equation*}
\sum_{A,\alpha}|(\nabla_{e_{\alpha}}e_A)^{\bot}|^2
=\sum_{A,\alpha,\beta}g(\nabla_{e_{\alpha}}e_A,e_{\beta})^2
=\sum_{A,\alpha,\beta}g(\nabla_{e_{\alpha}}e_{\beta},e_A)^2
=\sum_{\alpha,\beta}|(\nabla_{e_{\alpha}}e_{\beta})^{\top}|^2
\end{equation*}
and analogously interchanging $e_{\alpha}$ with $e_A$, then
\begin{equation*}
|\xi^{\rm alt}|^2=|T|^2+|T^{\bot}|^2
+\frac{1}{4}\sum_{A,B}|(\nabla_{e_A}e_B)^{\bot}|^2+\frac{1}{4}\sum_{\alpha,\beta}|(\nabla_{e_{\alpha}}e_{\beta})^{\top}|^2
\end{equation*}
and
\begin{equation*}
|\xi^{\rm sym}|^2=|B|^2+|B^{\bot}|^2+\frac{1}{4}\sum_{\alpha,\beta}|(\nabla_{e_{\alpha}}e_{\beta})^{\top}|^2.
\end{equation*}
Moreover,
\begin{equation*}
s_{\alg{m}}=\sum_{i,j}g(R(e_i,e_j)_{\alg{m}}e_j,e_i)=2\sum_{A,\alpha}g(R(e_A,e_{\alpha})e_{\alpha},e_A)=2s_{\rm mix}.
\end{equation*}
Putting all these facts together \eqref{eq:divformain} implies Walczak divergence formula \cite{PW}
\begin{equation}\label{eq:Walczakdiv}
-{\rm div}(H+H^{\bot})=-s_{\rm mix}+|H|^2+|H^{\bot}|^2+|T|^2+|T^{\bot}|^2-|B|^2-|B^{\bot}|^2.
\end{equation}
Assuming $M$ is closed, the Walczak integral formula holds.
\end{proof} 

Formula \eqref{eq:Walczakfor} has found many applications. Let us only state one of its consequences for $\mathcal{D}$ of codimension $1$, since it will be used in one of forthcoming subsections. In this case, clearly, $T^{\bot}=0$ and $B^{\bot}=H^{\bot}$. Denoting the unit positively oriented vector field orthogonal to $\mathcal{D}$ by $\zeta$, we have $\chi=-({\rm div}\zeta)\zeta+\nabla_{\zeta}\zeta$. Moreover,
\begin{equation*}
s_{\rm mix}=\sum_A g(R(e_A,\zeta)\zeta,e_A)={\rm Ric}(\zeta,\zeta).
\end{equation*}
Therefore, \eqref{eq:Walczakdiv} and \eqref{eq:Walczakfor} can be rewritten in the following, well known way.
\begin{prop}\label{prop:Walczakcodim1}
On a Riemannian manifold with an orientable codimension one distribition $\mathcal{D}$, we have the following divergence formula
\begin{equation*}
{\rm div}(-({\rm div}\zeta)\zeta+\nabla_{\zeta}\zeta)=
{\rm Ric}(\zeta,\zeta)-({\rm div}\zeta)^2-|T|^2+|B|^2
\end{equation*}
and, assuming $M$ is closed, the following integral formula
\begin{equation}\label{eq:Walczakforcodim1}
\int {\rm Ric}(\zeta,\zeta)\,{\rm vol}_M=\int_M ({\rm div}\zeta)^2+|T|^2-|B|^2\,{\rm vol}_M.
\end{equation}
\end{prop}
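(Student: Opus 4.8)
The plan is to read the statement off Proposition~\ref{prop:Walczakformula} by specializing the Walczak divergence formula \eqref{eq:Walczakdiv} and integral formula \eqref{eq:Walczakfor} to the case where the auxiliary distribution $\mathcal{D}^{\bot}$ has rank one. Write $\zeta$ for the unit positively oriented field spanning $\mathcal{D}^{\bot}$. The first step is to record the degeneracies of the rank--one case, as indicated just before the statement: a line field has no integrability tensor, so $T^{\bot}=0$ and $|T^{\bot}|^{2}=0$; and for a rank--one distribution the second fundamental form is exactly its trace part, $B^{\bot}=H^{\bot}=(\nabla_{\zeta}\zeta)^{\top}=\nabla_{\zeta}\zeta$, where the $\mathcal{D}^{\bot}$--component is dropped because $g(\nabla_{\zeta}\zeta,\zeta)=\tfrac12\zeta(|\zeta|^{2})=0$. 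Consequently $|B^{\bot}|^{2}=|H^{\bot}|^{2}$, and these two terms cancel against one another in both \eqref{eq:Walczakdiv} and \eqref{eq:Walczakfor}.

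Next I would evaluate the surviving quantities. Expanding $H=\sum_{A}(\nabla_{e_{A}}e_{A})^{\bot}=-\sum_{A}g(e_{A},\nabla_{e_{A}}\zeta)\,\zeta=-({\rm div}\zeta)\,\zeta$ (the $\zeta$--term of ${\rm div}\zeta$ vanishes for the same reason), so $|H|^{2}=({\rm div}\zeta)^{2}$, and by \eqref{eq:charvfdistributions} the characteristic vector field is $\chi=-H-H^{\bot}=({\rm div}\zeta)\zeta-\nabla_{\zeta}\zeta$. Moreover the mixed scalar curvature collapses to a Ricci term, $s_{\rm mix}=\sum_{A}g(R(e_{A},\zeta)\zeta,e_{A})={\rm Ric}(\zeta,\zeta)$, since $(e_{A})$ together with $\zeta$ is an orthonormal basis and $R(\zeta,\zeta)=0$. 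Substituting these identifications into \eqref{eq:Walczakdiv} --- with the $|T^{\bot}|^{2}$ term absent and the $|H^{\bot}|^{2}$, $|B^{\bot}|^{2}$ terms cancelling --- yields exactly the divergence identity in the statement. When $M$ is closed, integrating that identity over $M$ and applying the divergence theorem (the left--hand side integrates to zero) gives \eqref{eq:Walczakforcodim1}; equivalently, the same substitutions turn \eqref{eq:Walczakfor} directly into \eqref{eq:Walczakforcodim1}.

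I do not anticipate a real obstacle, as the whole argument is a specialization of results already available. The only points deserving care are the sign bookkeeping needed to get $H=-({\rm div}\zeta)\zeta$ and hence the stated form of $\chi$, and the observation that for the line field $\mathbb{R}\zeta$ the second fundamental form reduces to the mean curvature vector $H^{\bot}$ --- this is precisely what makes the $|B^{\bot}|^{2}$ and $|H^{\bot}|^{2}$ contributions cancel so that the formula closes up.
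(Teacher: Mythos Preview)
Your proposal is correct and follows exactly the paper's approach: specialize the Walczak divergence formula \eqref{eq:Walczakdiv} to codimension one using $T^{\bot}=0$, $B^{\bot}=H^{\bot}=\nabla_{\zeta}\zeta$, $H=-({\rm div}\zeta)\zeta$, and $s_{\rm mix}={\rm Ric}(\zeta,\zeta)$, then integrate. The only cosmetic point is that the vector field in the proposition is $-\chi$ rather than your (correctly computed) $\chi=({\rm div}\zeta)\zeta-\nabla_{\zeta}\zeta$, so the last substitution amounts to negating both sides of \eqref{eq:Walczakdiv}.
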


\subsection{Almost Hermitian structures}

Assume $(M,g,J)$ is an oriented Riemannian manifold with an almost complex structure $J$, i.e., $J^2=-{\rm id}_{TM}$, which is Hermitian, i.e., $g(JX,JY)=g(X,Y)$ for $X,Y\in TM$. Then $(M,g,J)$ is of even dimension $2n$ and induces an $U(n)$--structure. On the level of Lie algebras, we have
\begin{equation*}
\alg{so}(2n)=\alg{u}(n)\oplus\alg{u}(n)^{\bot},
\end{equation*}
where
\begin{equation*}
\alg{u}(n)=\{ A\in\alg{so}(n)\mid AJ=JA\},\quad
\alg{u}(n)^{\bot}=\{A\in\alg{so}(n) \mid AJ=-JA\}.
\end{equation*}
In particular, $[\alg{u}(n)^{\bot},\alg{u}(n)^{\bot}]\subset\alg{u}(n)$, thus $s_{\alg{u}(n)^{\bot}}^{\rm alt}=0$. The orthogonal projection from $\alg{so}(n)$ to $\alg{u}(n)^{\bot}$ equals $A\mapsto \frac{1}{2}\left(A+JAJ\right)$. Thus the $\alg{u}(n)$--component of $R$ is given by
\begin{equation}\label{eq:Runbot}
R(X,Y)_{\alg{u}(n)}=\frac{1}{2}\left(R(X,Y)+J\circ R(X,Y)\circ J\right).
\end{equation}
Moreover, the intrinsic torsion, being informally the projection of $-\nabla$ to $\alg{u}(n)^{\bot}$, is given by the formula
\begin{equation}\label{eq:inttorhermitian}
\xi_XY=-\frac{1}{2}J(\nabla_XJ)Y.
\end{equation}
Hence, the characteristic vector field $\chi$ is the following
\begin{equation}\label{eq:characteristichermitian}
\chi=-\frac{1}{2}J({\rm div} J).
\end{equation}

Let us describe the intrinsic torsion with the use of the Nijenhuis tensor $N$ and the K\"ahler form $\Omega$. Recall that
\begin{align*}
N(X,Y) &=[JX,JY]-J[X,JY]-J[JX,Y]-[X,Y]\\
&=(\nabla_XJ)JY-(\nabla_YJ)JX+(\nabla_{JX}J)Y-(\nabla_{JY}J)X
\end{align*}
and
\begin{equation*}
\Omega(X,Y)=g(X,JY).
\end{equation*}
It is a famous theorem by Newlander and Nirenberg that vanishing of the Nijenhuis tensor is equivalent to integrability of $J$, i.e. existence of complex coordinates adapted to $J$. In can be shown \cite{IA} that
\begin{equation}\label{eq:inttorhermitian2}
4g(\xi_XY,Z)=d\Omega(X,Y,JZ)+d\Omega(X,JY,Z)-g(N(Y,Z),X).
\end{equation}
Unfortunately, this shows that $\xi$ has no particular symmetries and, using \eqref{eq:inttorhermitian2} it is hard to give nice interpretations for the symmetrized and skew--symmetrized intrinsic torsion $\xi^{\rm sym}$ and $\xi^{\rm alt}$, respectively. Therefore, it is convenient to consider some restrictions or decomposition of the intrinsic torsion. The space of all possible intrinsic torsions is, in this case, $T^{\ast}M\otimes \alg{u}(n)^{\bot}(TM)$. Decomposing this space into irreducible modules with respect to $U(n)$--action, we get so--called Gray--Hervella classes \cite{GH}
\begin{equation}\label{eq:grayhervellaclasses}
T^{\ast}M\otimes \alg{u}(n)^{\bot}(TM)=\mathcal{W}_1\oplus\mathcal{W}_2\oplus\mathcal{W}_3\oplus\mathcal{W}_4,
\end{equation}
where each class can be characterized as follows:
\begin{align*}
&\mathcal{W}_1: && \textrm{$\xi_XY=-\xi_YX$, in particular, $\chi=0$.}\\
&\mathcal{W}_2: && \textrm{$g(\xi_XY,Z)+g(\xi_ZX,Y)+g(\xi_YZ,X)=0$. Then $\chi=0$.}\\
&\mathcal{W}_3: && \textrm{$\xi_XY=\xi_{JX}(JY)$ and $\chi=0$,}\\
&\mathcal{W}_4: && \textrm{$-4\xi_XY=\theta(Y)X+\theta(JY)JX-g(X,Y)\theta^{\sharp}-g(X,JY)J\theta^{\sharp}$, $\theta\in\Gamma(T^{\ast}M)$.} 
\end{align*}
In the characterization of $\mathcal{W}_4$, $\theta$ is a one--form often called the Lee form. 

The following proposition contains well--known and useful properties of almost Hermitian $\mathcal{W}_1,\ldots,\mathcal{W}_4$ classes.

\begin{prop}[\cite{GH}]\label{prop:GHclassesproperties}
We have the following characterization of Gray--Hervalla classes:
\begin{align*}\label{eq:w1w2w3w4rel}
\mathcal{W}_1\oplus\mathcal{W}_2 &=\{\xi\in T^{\ast}M\otimes \alg{u}(n)^{\bot}(TM)\mid \xi_{JX}JY=-\xi_XY\}\\
\mathcal{W}_2\oplus\mathcal{W}_4 &=\{\xi\in T^{\ast}M\otimes \alg{u}(n)^{\bot}(TM)\mid \xi_{JX}JY=\xi_XY\}.
\end{align*}
Moreover, 
\begin{equation*}
\mathcal{W}_1\oplus\mathcal{W}_2\oplus\mathcal{W}_3=\{\xi\in T^{\ast}M\otimes \alg{u}(n)^{\bot}(TM)\mid \chi=0\}.
\end{equation*}
\end{prop}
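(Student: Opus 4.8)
The plan is to verify each of the three characterizations by working directly with the defining conditions of the Gray--Hervella classes, which are stated in the excerpt. The key structural input is the behavior of the intrinsic torsion under the ``$J$--twist'' $\xi \mapsto \xi^{J}$ defined by $\xi^{J}_{X}Y := \xi_{JX}JY$. Note first that $\xi^{J}$ again lies in $T^{\ast}M \otimes \alg{u}(n)^{\bot}(TM)$: since $\xi_{JX} \in \alg{u}(n)^{\bot}$, it anticommutes with $J$, and a short computation shows $J \circ \xi^{J}_{X} = -\xi^{J}_{X} \circ J$, so the twist preserves the ambient space. The operator $\xi \mapsto \xi^{J}$ is an involution on $T^{\ast}M \otimes \alg{u}(n)^{\bot}(TM)$, hence it splits the space into its $(+1)$-- and $(-1)$--eigenspaces. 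Looking at the characterizations in the bulleted list, $\mathcal{W}_{1}$ and $\mathcal{W}_{3}$ lie in the $(+1)$--eigenspace (for $\mathcal{W}_{3}$ this is literally the defining condition $\xi_{X}Y = \xi_{JX}JY$; for $\mathcal{W}_{1}$ one checks that total skew-symmetry of $\xi$ together with $\xi_{X} \in \alg{u}(n)^{\bot}$ forces $\xi_{JX}JY = \xi_{X}Y$), while $\mathcal{W}_{4}$ lies in the $(-1)$--eigenspace (substitute $JX, JY$ into the $\mathcal{W}_{4}$ formula and use $J^{2} = -\mathrm{id}$). The class $\mathcal{W}_{2}$ is the one requiring a little thought: its defining first-Bianchi-type identity does not manifestly respect the twist, so one must examine how the twist acts on it.

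Granting the eigenspace decomposition, the first identity $\mathcal{W}_{1} \oplus \mathcal{W}_{2} = \{\xi \mid \xi_{JX}JY = -\xi_{X}Y\}$ is the assertion that the $(-1)$--eigenspace of the twist is exactly $\mathcal{W}_{1}\oplus\mathcal{W}_{2}$. One inclusion follows once we know $\mathcal{W}_{1}$ and $\mathcal{W}_{2}$ both sit in the $(-1)$--eigenspace. But wait --- I just said $\mathcal{W}_{1}$ sits in the $(+1)$--eigenspace; in fact one must be careful about signs here, and the cleanest route is to accept the Gray--Hervella decomposition \eqref{eq:grayhervellaclasses} as given and determine, for each $\mathcal{W}_{i}$, the sign $\epsilon_{i} \in \{\pm 1\}$ with $\xi^{J} = \epsilon_{i}\xi$ by testing the explicit model tensors: one finds $\epsilon_{1} = \epsilon_{2} = -1$ and $\epsilon_{3} = \epsilon_{4} = +1$ (the signs can be read off from the Gray--Hervella conventions; the point is only that $\mathcal{W}_{1},\mathcal{W}_{2}$ share one sign and $\mathcal{W}_{3},\mathcal{W}_{4}$ the other). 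Since the twist acts as a scalar on each irreducible summand and these two scalars are distinct, the $(-1)$--eigenspace is precisely $\mathcal{W}_{1}\oplus\mathcal{W}_{2}$ and the $(+1)$--eigenspace is $\mathcal{W}_{3}\oplus\mathcal{W}_{4}$; this gives the first identity directly, and the second identity $\mathcal{W}_{2}\oplus\mathcal{W}_{4}$ will follow by an analogous second involution, namely $\xi \mapsto \xi_{JX}Y$ composed appropriately, or more simply by combining the first identity with the known ``co-twist'' behavior distinguishing $\mathcal{W}_{1}$ from $\mathcal{W}_{2}$ and $\mathcal{W}_{3}$ from $\mathcal{W}_{4}$.

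For the last statement, $\mathcal{W}_{1}\oplus\mathcal{W}_{2}\oplus\mathcal{W}_{3} = \{\xi \mid \chi = 0\}$, the inclusion ``$\subseteq$'' is immediate: the bulleted characterizations already record that $\chi = 0$ on each of $\mathcal{W}_{1}$, $\mathcal{W}_{2}$, and $\mathcal{W}_{3}$, so $\chi$ vanishes on their sum. For ``$\supseteq$'' it suffices to show $\chi \neq 0$ on every nonzero element of $\mathcal{W}_{4}$, equivalently that the linear map $\mathcal{W}_{4} \to TM$, $\xi \mapsto \chi = \sum_{i}\xi_{e_{i}}e_{i}$, is injective. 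Using the explicit $\mathcal{W}_{4}$ formula $-4\xi_{X}Y = \theta(Y)X + \theta(JY)JX - g(X,Y)\theta^{\sharp} - g(X,JY)J\theta^{\sharp}$, compute $\chi$ by setting $X = Y = e_{i}$ and summing: the terms $\theta(e_{i})e_{i}$ sum to $\theta^{\sharp}$, the terms $\theta(Je_{i})Je_{i}$ also sum to $\theta^{\sharp}$ (as $(Je_{i})$ is again an orthonormal basis), the term $g(e_{i},e_{i})\theta^{\sharp}$ sums to $2n\,\theta^{\sharp}$, and $g(e_{i},Je_{i}) = 0$ for each $i$; hence $-4\chi = (2 - 2n)\theta^{\sharp}$, i.e. $\chi = \tfrac{n-1}{2}\theta^{\sharp}$. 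Since $n \geq 2$ whenever $\mathcal{W}_{4}$ is nontrivial, $\chi = 0$ forces $\theta = 0$, hence $\xi = 0$; this proves injectivity on $\mathcal{W}_{4}$ and completes the proof. The main obstacle I anticipate is pinning down the correct twist-eigenvalue sign for the $\mathcal{W}_{2}$ summand, since its defining cyclic identity is not obviously compatible with the $J$--twist; I would handle this either by invoking the Gray--Hervella conventions directly or by a short explicit computation on a model tensor in $\mathcal{W}_{2}$, everything else being routine index manipulation with $J^{2} = -\mathrm{id}$ and the skew-symmetry $g(\xi_{X}Y,Z) = -g(\xi_{X}Z,Y)$.
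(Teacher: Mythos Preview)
The paper does not prove this proposition; it is quoted from Gray--Hervella \cite{GH}, so there is no argument to compare yours against. Your overall strategy via the $J$--twist involution $\xi\mapsto\xi^{J}$, $\xi^{J}_{X}Y:=\xi_{JX}JY$, is the right one, and your treatment of the third statement (computing $\chi=\tfrac{n-1}{2}\theta^{\sharp}$ on $\mathcal{W}_{4}$ and invoking injectivity) is correct and matches how the paper itself uses this fact in the proof of Proposition~\ref{prop:curvaturerelations}.

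There is, however, a real gap in your handling of the first two identities, and it stems from not pushing your own observation to its conclusion. You correctly note that $\xi\mapsto\xi^{J}$ is an involution, so its $(\pm1)$--eigenspaces are \emph{complementary}. But the two displayed identities, read literally, say that the $(-1)$--eigenspace is $\mathcal{W}_{1}\oplus\mathcal{W}_{2}$ and the $(+1)$--eigenspace is $\mathcal{W}_{2}\oplus\mathcal{W}_{4}$; these share $\mathcal{W}_{2}$, which is impossible. The second line is in fact a typo in the paper: the $(+1)$--eigenspace is $\mathcal{W}_{3}\oplus\mathcal{W}_{4}$, as is confirmed by how the proposition is actually \emph{used} in the proof of Corollary~\ref{cor:SUn}, where one reads $\sum_{i,j}g(\xi_{e_j}e_i,\xi_{Je_j}Je_i)=-|\xi^{12}|^{2}+|\xi^{34}|^{2}$. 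Your appeal to ``an analogous second involution'' for the $\mathcal{W}_{2}\oplus\mathcal{W}_{4}$ line is papering over this inconsistency rather than detecting it. Separately, you never verify the twist-sign for $\mathcal{W}_{2}$: you flag it as the main obstacle and then simply assert $\epsilon_{2}=-1$. Since the twist is $U(n)$--equivariant, each irreducible $\mathcal{W}_{i}$ lies in a single eigenspace; having checked $\epsilon_{1}=-1$ (your eventual sign is right, the initial one was not) and $\epsilon_{3}=\epsilon_{4}=+1$, it remains only to test one nonzero element of $\mathcal{W}_{2}$, or to observe via \eqref{eq:inttorhermitian2} that on $\mathcal{W}_{1}\oplus\mathcal{W}_{2}$ (where $d\Omega=0$) the intrinsic torsion is determined by the Nijenhuis tensor, whose $J$--bilinearity $N(JY,JZ)=-N(Y,Z)$ yields $\xi_{JX}JY=-\xi_{X}Y$ directly.
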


By above proposition, if $\xi\in\mathcal{W}_1\oplus\mathcal{W}_2\oplus\mathcal{W}_3$, then formula \eqref{eq:divformain} is a point--wise formula for the $u(n)^{\bot}$--component of the scalar curvature
\begin{equation}\label{eq:pointwisehermitian}
\frac{1}{2}s_{\alg{u}(n)^{\bot}}=|\xi^{\rm alt}|^2-|\xi^{\rm sym}|^2.
\end{equation}

The left--hand--side has a nice interpretation, which is valid for all Gray--Hervella classes. Define Ricci and $\ast$--Ricci tensors by
\begin{equation*}
{\rm Ric}(X,Y)=\sum_i g(R(X,e_i)e_i,Y),\quad {\rm Ric}^{\ast}(X,Y)=g(R(X,e_i)Je_i,JY).
\end{equation*}
These induce, taking traces, scalar curvatures
\begin{equation*}
s=\sum_i {\rm Ric}(e_i,e_i),\quad s^{\ast}=\sum_i {\rm Ric}^{\ast}(e_i,e_i).
\end{equation*}
Then, by \eqref{eq:Runbot}
\begin{equation}\label{eq:sunbot}
\begin{split}
s_{\alg{u}(n)^{\bot}} &=\sum_{i,j}g(R(e_j,e_i)_{\alg{u}(n)^{\bot}}e_i,e_j)\\
&=\frac{1}{2}\sum_{i,j} g(R(e_j,e_i)e_i,e_j)+g(JR(e_j,e_i)Je_i,e_j)\\
&=\frac{1}{2}(s-s^{\ast}).
\end{split}
\end{equation}

Now, we will discus relations between elements in the divergence formula \eqref{eq:divformain} in each pure class $\mathcal{W}_i$ separately. We will proceed by studying quadratic invariants of the $U(n)$--representation on the space of intrinsic torsions $T^{\ast}M\otimes\alg{u}(n)^{\bot}(TM)$ \cite{GH}:
\begin{align*}
& i_1=\sum_{i,j,k}\alpha(e_i,e_j,e_k)^2 && i_2=\sum_{i,j,k} \alpha(e_i,e_j,e_k)\alpha(e_j,e_i,e_k)\\
& i_3=\sum_{i,j,k}\alpha(e_i,e_j,e_k)\alpha(Je_i,Je_j,e_k) && i_4=\sum_{i,j,k}\alpha(e_i,e_i,e_k)\alpha(e_j,e_j,e_k),
\end{align*}
where $\alpha(X,Y,Z)=g(\xi_XY,Z)$. Notice that
\begin{equation*}
i_1=|\xi|^2,\quad i_2=|\xi^{\rm sym}|^2-|\xi^{\rm alt}|^2,\quad i_4=|\chi|^2.
\end{equation*}
Thus the divergence formula \eqref{eq:divformain} may be rewritten, using these invariants and formula \eqref{eq:sunbot} as follows
\begin{equation}\label{eq:divforhermitianinvariants}
{\rm div}\chi=-\frac{1}{4}(s-s^{\ast})-i_2+i_4.
\end{equation}

It is not hard to see by definitions of each pure class $\mathcal{W}_i$ and Proposition \ref{prop:GHclassesproperties} that the following fact holds.

\begin{prop}\label{prop:hermitianinvariants}
Quadratic invariants characterize pure Gray--Hervella classes $\mathcal{W}_i$ as listed in a Table 1. In a table $i_j^{(k)}$ denotes an invariant $i_j$ considered for a class $\mathcal{W}_k$.
\end{prop}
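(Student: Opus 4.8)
The plan is to compute, class by class, the four quadratic invariants $i_1,i_2,i_3,i_4$ for a general element $\xi$ of each pure module $\mathcal{W}_k$, using the explicit pointwise characterizations of $\mathcal{W}_1,\dots,\mathcal{W}_4$ already recorded (and those in Proposition~\ref{prop:GHclassesproperties}), and read off the linear relations among them. Since on each irreducible $U(n)$--module the space of quadratic invariants is one--dimensional (as noted in Remark~\ref{rem:invariants}), every $i_j^{(k)}$ is a fixed scalar multiple of, say, $i_1^{(k)}=|\xi^{\mathcal{W}_k}|^2$, so the whole content of Table~1 is the determination of those scalars.

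First I would handle $\mathcal{W}_1$: here $\xi$ is totally skew in all three arguments (it is skew in $Y,Z$ always, and $\mathcal{W}_1$ adds skewness in $X,Y$), so $\chi=0$ gives $i_4^{(1)}=0$ immediately, and total antisymmetry gives $\alpha(e_j,e_i,e_k)=-\alpha(e_i,e_j,e_k)$, hence $i_2^{(1)}=-i_1^{(1)}$; for $i_3^{(1)}$ one uses the $\mathcal{W}_1\oplus\mathcal{W}_2$ relation $\xi_{JX}JY=-\xi_XY$, which forces $\alpha(Je_i,Je_j,e_k)=-\alpha(e_i,e_j,e_k)$ up to the correction coming from the last slot, so a short index computation (summing over $k$ and using $J$--invariance of the frame sum) yields $i_3^{(1)}$ as a multiple of $i_1^{(1)}$. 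Next, $\mathcal{W}_2$: again $\chi=0$ so $i_4^{(2)}=0$, the first Bianchi--type identity $\alpha(X,Y,Z)+\alpha(Z,X,Y)+\alpha(Y,Z,X)=0$ combined with skewness in the last two slots gives a relation between $i_1^{(2)}$ and $i_2^{(2)}$, and the relation $\xi_{JX}JY=-\xi_XY$ again pins down $i_3^{(2)}$. For $\mathcal{W}_3$ I would use $\xi_XY=\xi_{JX}JY$ together with $\chi=0$; the symmetry $\xi_{JX}JY=\xi_XY$ makes $i_3^{(3)}$ equal to $i_1^{(3)}$ (again modulo the last--slot $J$), while the Bianchi identity fails here, so $i_2^{(3)}$ is computed directly from the defining relation. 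Finally $\mathcal{W}_4$: one simply substitutes the explicit formula $-4\xi_XY=\theta(Y)X+\theta(JY)JX-g(X,Y)\theta^\sharp-g(X,JY)J\theta^\sharp$ into each $i_j$ and carries out the resulting elementary contractions in terms of $|\theta|^2$; here $\chi\neq0$, and one finds $i_4^{(4)}$ and $i_2^{(4)}$ as explicit multiples of $|\theta|^2$ (equivalently of $i_1^{(4)}$), the multiples depending on $n=\tfrac12\dim M$.

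The main obstacle is purely computational: the careful bookkeeping of the $J$--twisted contractions, in particular keeping track of the fact that $\xi$ is \emph{not} totally symmetric or antisymmetric in the last slot, so identities like $\xi_{JX}JY=\pm\xi_XY$ only control the first two arguments and one must separately handle how $J$ acts on the $Z$--slot when forming $i_3$. There is no conceptual difficulty — each case is a finite sum of traces of products of $J$ and the frame — and the one--dimensionality of the invariant space is a useful consistency check: once one value $i_j^{(k)}$ is computed, the ratio to $i_1^{(k)}$ must be frame--independent, which catches arithmetic slips. Assembling the computed ratios into Table~1 completes the proof.
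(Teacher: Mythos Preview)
Your proposal is correct and is exactly the approach the paper has in mind: the paper offers no detailed proof, only the remark that the table follows from the defining conditions of each $\mathcal{W}_k$ together with Proposition~\ref{prop:GHclassesproperties}, and your plan is precisely a fleshed--out version of that. One small simplification: your worry about a ``correction coming from the last slot'' in the computation of $i_3$ is unnecessary, since $\xi_{JX}JY=\pm\xi_XY$ is an equality of vectors and hence gives $\alpha(JX,JY,Z)=\pm\alpha(X,Y,Z)$ for every $Z$, so $i_3^{(k)}=\pm i_1^{(k)}$ on $\mathcal{W}_1\oplus\mathcal{W}_2$ (minus sign) and on $\mathcal{W}_3\oplus\mathcal{W}_4$ (plus sign) with no further index manipulation.
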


\begin{table}[!h]
\caption{Quadratic invariants for Gray--Hervella classes $\mathcal{W}_k$}
\begin{tabular}{|c|ccc|}
\hline
$\mathcal{W}_1$ & $i_4^{(1)}=0$, & $i_3^{(1)}=-i_1^{(1)}$, & $i_2^{(1)}=-i_1^{(1)}$ \\
\hline
$\mathcal{W}_2$ & $i_4^{(2)}=0$, & $i_3^{(2)}=-i_1^{(2)}$, & $i_2^{(2)}=\frac{1}{2}i_1^{(2)}$ \\
\hline
$\mathcal{W}_3$ & $i_4^{(3)}=0$, & $i_3^{(3)}=i_1^{(3)}$, & $i_2^{(3)}=0$ \\
\hline
$\mathcal{W}_4$ & $i_4^{(4)}=\frac{1}{2}(n-1)i_1^{(4)}$, & $i_3^{(4)}=i_1^{(4)}$, & $i_2^{(4)}=0$ \\
\hline
\end{tabular} 
\end{table}

Using Proposition \ref{prop:hermitianinvariants} and relation \eqref{eq:divforhermitianinvariants} we can derive some useful relations for each pure class. These relations are well-known (see the references listed in the Proposition below). Let us enlarge on this. In analogy to Gray \cite{AG2} we consider the following curvature condition
\begin{equation*}
\mathcal{SC}:\quad s=s^{\ast}.
\end{equation*}
Notice that the class $\mathcal{SC}$ contains Gray class $\mathcal{G}_1$, which by definition, denotes almost Hermitian structures for which the curvature tensor satisfies
\begin{equation*}
\mathcal{G}_1:\quad R(X,Y,Z,W)=R(X,Y,JZ,JW).
\end{equation*}

\begin{prop}\label{prop:curvaturerelations}
The following relations hold.
\begin{enumerate}
\item (compare \cite{AG,FF}) For a $\mathcal{W}_1$ structure $s-s^{\ast}=|\nabla J|^2$. In particular, there is no nearly K\"ahler non--K\"ahler structure satisfying $\mathcal{SC}$ condition.
\item (compare \cite{SK,FF}) For a $\mathcal{W_2}$ structure $2(s-s^{\ast})=-|\nabla J|^2$. In particular, there is no almost K\"ahler non--K\"ahler structure satisfying $\mathcal{SC}$ condition.
\item Any $\mathcal{W}_3$ structure satisfies $\mathcal{SC}$ condition.
\item (compare \cite{IV2,FF0,FF}) For a $\mathcal{W}_4$ structure with a Lee form $\theta$
\begin{equation*}
(n-1){\rm div}\theta^{\sharp}=-(s-s^{\ast})+(n-1)^2|\theta|^2.
\end{equation*}
In particular, there is no locally conformally K\"ahler non--Kahler structure defined on a closed manifold which satisfies $\mathcal{SC}$ condition.
\end{enumerate}
\end{prop}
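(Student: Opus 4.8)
The plan is to derive all four relations as direct consequences of the divergence formula \eqref{eq:divforhermitianinvariants}, the invariant table in Proposition \ref{prop:hermitianinvariants}, and the identity $i_1=|\xi|^2=\frac14|\nabla J|^2$ coming from \eqref{eq:inttorhermitian} (note $|\xi_XY|$ involves a factor $\tfrac12$, so $\sum_{i,j}|\xi_{e_i}e_j|^2=\tfrac14\sum_{i,j}|(\nabla_{e_i}J)e_j|^2$). For each pure class $\mathcal{W}_k$, the strategy is the same: substitute the class-specific values of $i_2$ and $i_4$ from Table 1 into \eqref{eq:divforhermitianinvariants} to express ${\rm div}\chi$ purely in terms of $s-s^{\ast}$ and $i_1$, and then identify $\chi$ explicitly via \eqref{eq:characteristichermitian}.

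For $\mathcal{W}_1$ and $\mathcal{W}_2$ the characteristic vector field vanishes by Proposition \ref{prop:GHclassesproperties}, so ${\rm div}\chi=0$ and \eqref{eq:divforhermitianinvariants} collapses to $-\tfrac14(s-s^{\ast})=i_2$. For $\mathcal{W}_1$, Table 1 gives $i_2=-i_1=-|\xi|^2=-\tfrac14|\nabla J|^2$, whence $s-s^{\ast}=|\nabla J|^2$; for $\mathcal{W}_2$, $i_2=\tfrac12 i_1=\tfrac18|\nabla J|^2$, whence $2(s-s^{\ast})=-|\nabla J|^2$. In both cases the $\mathcal{SC}$ condition $s=s^{\ast}$ forces $\nabla J=0$, i.e. the Kähler condition, giving the two non-existence statements. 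For $\mathcal{W}_3$ we again have $\chi=0$ and $i_2=0$ from Table 1, so \eqref{eq:divforhermitianinvariants} immediately yields $s=s^{\ast}$, which is assertion (3).

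The class $\mathcal{W}_4$ is the one requiring genuine computation and is the main obstacle. Here $\chi\neq 0$ in general, so I must compute ${\rm div}\chi$ honestly. From the defining relation $-4\xi_XY=\theta(Y)X+\theta(JY)JX-g(X,Y)\theta^{\sharp}-g(X,JY)J\theta^{\sharp}$ one contracts over an orthonormal basis to get $-4\chi=\sum_i\bigl(\theta(e_i)e_i+\theta(Je_i)Je_i-g(e_i,e_i)\theta^{\sharp}-g(e_i,Je_i)J\theta^{\sharp}\bigr)=\theta^{\sharp}+\theta^{\sharp}-2n\,\theta^{\sharp}-0=-2(n-1)\theta^{\sharp}$, so $\chi=\tfrac12(n-1)\theta^{\sharp}$ and ${\rm div}\chi=\tfrac12(n-1){\rm div}\theta^{\sharp}$. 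Similarly, from Table 1, $i_4=\tfrac12(n-1)i_1$ and $i_2=0$; I still need $i_1=|\xi|^2$ in terms of $|\theta|^2$, which follows by squaring the defining expression and contracting — a routine but careful computation giving $16|\xi|^2=\sum_{i,j}|{-4\xi_{e_i}e_j}|^2$ and ultimately $i_1=|\xi|^2=\tfrac12(n-1)|\theta|^2$ (the coefficient must be pinned down by expanding the four-term square, using $|\theta^{\sharp}|^2=|\theta|^2$, $\dim M=2n$, and orthogonality of $\theta^{\sharp}$ and $J\theta^{\sharp}$). Plugging $i_2=0$ and $i_4=\tfrac12(n-1)i_1=\tfrac14(n-1)^2|\theta|^2$ into \eqref{eq:divforhermitianinvariants} gives $\tfrac12(n-1){\rm div}\theta^{\sharp}=-\tfrac14(s-s^{\ast})+\tfrac14(n-1)^2|\theta|^2$, i.e. $(n-1){\rm div}\theta^{\sharp}=-(s-s^{\ast})+(n-1)^2|\theta|^2$, as claimed. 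On a closed manifold, integrating kills the left-hand side, so $\mathcal{SC}$ forces $\int_M(n-1)^2|\theta|^2\,{\rm vol}=0$, hence $\theta\equiv 0$ and the structure is Kähler, which is the final non-existence statement. The delicate points to get right are the normalization constants: the factor $\tfrac12$ in \eqref{eq:inttorhermitian}, the relation between $\sum_{i,j}|\xi_{e_i}e_j|^2$ and $|\nabla J|^2$, and the coefficient in $i_1=\tfrac12(n-1)|\theta|^2$; everything else is bookkeeping with Table 1 and \eqref{eq:divforhermitianinvariants}.
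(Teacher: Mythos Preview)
Your approach is exactly the paper's: apply the divergence formula \eqref{eq:divforhermitianinvariants} together with the invariant relations from Table~1 and the identity $|\xi|^2=\tfrac14|\nabla J|^2$, and for $\mathcal{W}_4$ compute $\chi=\tfrac{n-1}{2}\theta^{\sharp}$ and $i_4=\tfrac{(n-1)^2}{4}|\theta|^2$ directly from the defining formula of the class. The paper's proof is in fact terser than yours---it simply records $\chi$ and $i_4^{(4)}$ and declares the rest routine---so you have filled in precisely the details it omits.

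One arithmetic slip to fix: from $\tfrac12(n-1)\,{\rm div}\,\theta^{\sharp}=-\tfrac14(s-s^{\ast})+\tfrac14(n-1)^2|\theta|^2$ you cannot clear denominators to obtain $(n-1)\,{\rm div}\,\theta^{\sharp}=-(s-s^{\ast})+(n-1)^2|\theta|^2$; multiplying by $4$ yields $2(n-1)\,{\rm div}\,\theta^{\sharp}$ on the left. This does not affect the method (nor the non-existence conclusion, which only needs the signs), but the factor should be reconciled with the stated formula.
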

\begin{proof}
It suffices to apply \eqref{eq:divformain}, Proposition \ref{prop:hermitianinvariants} and use the fact that $|\xi|^2=\frac{1}{4}|\nabla J|^2$. For a $\mathcal{W}_4$ case, notice, that
\begin{equation*}
\chi=\frac{n-1}{2}\theta^{\sharp},\quad i^{(4)}_4=\frac{(n-1)^2}{4}|\theta|^2.\qedhere
\end{equation*}
\end{proof}

Now, we show that the main integral formula \eqref{eq:intformain} in an almost Hermitian case is equivalent to a Bor--Lamoneda formula \cite{BL}. Decompose $\xi$ and $\chi$ with respect to the Gray--Hervella classes as follows
\begin{equation*}
\xi=\xi^1+\xi^2+\xi^3+\xi^4,\quad \chi=\chi^1+\chi^2+\chi^3+\chi^4,
\end{equation*}
i.e., $\chi_k=\sum_i \xi^k_{e_i}e_i$, and let
\begin{equation*}
E_k=|\chi^k|^2+|\xi^{k,\rm alt}|^2-|\xi^{k,\rm sym}|^2=-i_2^{(k)}+i_4^{(k)}.
\end{equation*} 
It can be shown that
\begin{equation*}
i_j=\sum_k i_j^{(k)},\quad j=1,2,3,4. 
\end{equation*}
Thus, by above considerations (see also Remark \ref{rem:invariants}), we have
\begin{equation*}
E_1=|\xi^1|^2,\quad E_2=-\frac{1}{2}|\xi^2|^2,\quad E_3=0,\quad E_4=\frac{1}{2}(n-1)|\xi^4|^2.
\end{equation*}
Hence
\begin{equation}\label{eq:BLformula}
{\rm div}\chi=|\xi^1|^2-\frac{1}{2}|\xi^2|^2+\frac{n-1}{2}|\xi^4|^2-\frac{1}{4}(s-s^{\ast}),
\end{equation}
which implies the integral formula by Bor and Hern\'{a}ndez Lamoneda \cite{BL} (assuming $M$ is closed)
\begin{equation}\label{eq:BHLformula}
\int_M (2|\xi^1|^2-|\xi^2|^2+(n-1)|\xi^4|^2)\,{\rm vol}_M
=\frac{1}{2}\int_M s-s^{\ast}\,{\rm vol}_M.
\end{equation}

\subsection{Special almost Hermitian structures}

Assume $(M,g,J)$ is an almost Hermitian manifold equipped with a complex volume form $\Psi=\psi_++i\psi_-$ such that $\skal{\Psi}{\Psi}_{\mathbb{C}}=1$, where the inner product is a natural extension of an inner product for real valued forms. This structure defines reduction of a structure group to special unitary group $SU(n)$, hence a $SU(n)$--structure. On the level of Lie algebras we have
\begin{equation*}
\alg{so}(2n)=\alg{su}(n)\oplus(\alg{u}(n)^{\bot}\oplus\mathbb{R}),
\end{equation*}
since $\alg{u}(n)=\alg{su}(n)\oplus\mathbb{R}$. For an element $A\in\alg{u}(n)$ let
\begin{equation*}
A=\left(\begin{array}{cc} A_0 & -A_1 \\ A_1 & A_0\end{array}\right)\in\alg{so}(2n).
\end{equation*}
Then $A\in\alg{su}(n)$ if and only if $A\in\alg{u}(n)$ and ${\rm tr}A_1=0$. Notice that 
\begin{equation*}
{\rm tr}A_1=\frac{1}{2}\sum_i g(Ae_i,Je_i)=-\frac{1}{2}{\rm tr}(AJ).
\end{equation*} 
Thus, the orthogonal projection from $\alg{so}(2n)$ to $\alg{su}(n)^{\bot}=\alg{u}(n)^{\bot}\oplus\mathbb{R}$ equals
\begin{equation*}
A\mapsto \frac{1}{2}(A+JAJ)-\frac{1}{2n}({\rm tr} AJ)J.
\end{equation*}
The intrinsic torsion $\xi$ equals $\xi=\xi^{U(n)}+\eta$ \cite{BL,CS,FMC}, where $\xi^{U(n)}$ is the intrinsic torsion of related $U(n)$--structure and
\begin{equation*}
\eta_XY=-\frac{1}{2n}\sum_i g(\xi_XJe_i,e_i)JY.
\end{equation*}
Define a one--form $\eta$ by the relation $\eta_XY=\eta(JX)JY$. This convention will appear to be useful. Denote the class in the space of all possible intrinsic torsions induced by $\eta$ by $\mathcal{W}_5$. 

Split $s_{\alg{su}(n)^{\bot}}$ into $s_{\alg{u}(n)^{\bot}}$ and $s_{\mathbb{R}}$ with respect to the decomposition $\alg{su}(n)^{\bot}=\alg{u}(n)^{\bot}\oplus\mathbb{R}$.

\begin{prop}\label{prop:SUn}
On a closed $SU(n)$--structure $(M,g,J)$ with the $\mathcal{W}_5$-component induced by the $1$--form $\eta$ we have the following integral formula
\begin{equation}\label{eq:intforSUn}
8\int_M \eta(\chi^{U(n)})\,{\rm vol}_M=\int_M s_{\mathbb{R}}-s^{\rm alt}_{\alg{su}(n)^{\bot}}\,{\rm vol}_M.
\end{equation}
In particular, if $(M,g,J)$ is of Gray--Hervella class $\mathcal{W}_1\oplus\mathcal{W}_2\oplus\mathcal{W}_3$ treated as $U(n)$--structure, then $\int_M s_{\mathbb{R}}=\int_M s_{\alg{su}(n)^{\bot}}^{\rm alt}$.
\end{prop}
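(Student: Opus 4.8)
The plan is to obtain \eqref{eq:intforSUn} by playing off the master integral formula \eqref{eq:intformain} for $G=SU(n)$ against the one for $G=U(n)$ on the same almost Hermitian manifold. Recall from the excerpt that $\xi=\xi^{U(n)}+\eta$ with $\eta_XY=\eta(JX)JY$; since $(Je_i)$ is again an orthonormal basis, the induced characteristic field splits as $\chi=\chi^{U(n)}+\chi^{\eta}$ with $\chi^{\eta}=\sum_i\eta(Je_i)Je_i=\eta^{\sharp}$, the metric dual of the one--form $\eta$. On the curvature side the decomposition $\alg{su}(n)^{\bot}=\alg{u}(n)^{\bot}\oplus\mathbb{R}$ gives $s_{\alg{su}(n)^{\bot}}=s_{\alg{u}(n)^{\bot}}+s_{\mathbb{R}}$, while $s^{\rm alt}_{\alg{u}(n)^{\bot}}=0$ because $[\alg{u}(n)^{\bot},\alg{u}(n)^{\bot}]\subset\alg{u}(n)$.

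Subtracting the $U(n)$--instance of \eqref{eq:intformain} from the $SU(n)$--instance, the left--hand sides collapse to $\tfrac12\int_M(s_{\mathbb{R}}-s^{\rm alt}_{\alg{su}(n)^{\bot}})\,{\rm vol}_M$, so everything reduces to showing that the difference of the right--hand sides equals $4\int_M\eta(\chi^{U(n)})\,{\rm vol}_M$. Expanding $|\chi|^2$, $|\xi^{\rm alt}|^2$ and $|\xi^{\rm sym}|^2$ bilinearly in $\xi=\xi^{U(n)}+\eta$, the pure $\xi^{U(n)}$--contributions cancel against the $U(n)$--formula, and one is left with cross terms and pure $\eta$--terms. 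The pure $\eta$--terms are handled by the elementary identity $\sum_k g(Je_j,e_k)g(Je_i,e_k)=\delta_{ij}$, which yields $|\eta^{\rm alt}|^2-|\eta^{\rm sym}|^2=-|\eta|^2$ and $|\eta^{\sharp}|^2=|\eta|^2$; these cancel.

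The substance of the computation is the cross term. Using the polarization identity $\langle A^{\rm alt},B^{\rm alt}\rangle-\langle A^{\rm sym},B^{\rm sym}\rangle=-\sum_{i,j,k}g(A_{e_i}e_j,e_k)g(B_{e_j}e_i,e_k)$ with $A=\xi^{U(n)}$ and $B=\eta$, contracting the index $k$, and then using that $\xi^{U(n)}_X$ is skew--symmetric and anticommutes with $J$ (it is $\alg{u}(n)^{\bot}$--valued), so that $\sum_i\xi^{U(n)}_{e_i}(Je_i)=-J\chi^{U(n)}$, one gets $\langle\xi^{U(n),\rm alt},\eta^{\rm alt}\rangle-\langle\xi^{U(n),\rm sym},\eta^{\rm sym}\rangle=\eta(\chi^{U(n)})$. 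Collecting surviving terms, $|\chi|^2-|\chi^{U(n)}|^2=2\eta(\chi^{U(n)})+|\eta|^2$ and the $\xi^{\rm alt}$/$\xi^{\rm sym}$ cross contributions give $2\eta(\chi^{U(n)})-|\eta|^2$, adding up to $4\eta(\chi^{U(n)})$; this is \eqref{eq:intforSUn}. For the final assertion, if the underlying $U(n)$--structure lies in $\mathcal{W}_1\oplus\mathcal{W}_2\oplus\mathcal{W}_3$ then $\chi^{U(n)}=0$ by Proposition \ref{prop:GHclassesproperties}, so the right--hand side vanishes and $\int_M s_{\mathbb{R}}=\int_M s^{\rm alt}_{\alg{su}(n)^{\bot}}$.

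I expect the only real obstacle to be the sign bookkeeping in the cross--term identity: conceptually it is nothing more than the $\alg{u}(n)^{\bot}$--symmetry of $\xi^{U(n)}$ together with $J^2=-{\rm id}$, but the chain of skew--symmetrizations and $J$'s is where an error would easily creep in. A variant I might use instead is to run the same computation at the level of the divergence formula \eqref{eq:divformain}, arriving at ${\rm div}(\eta^{\sharp})=\tfrac12 s^{\rm alt}_{\alg{su}(n)^{\bot}}-\tfrac12 s_{\mathbb{R}}+4\eta(\chi^{U(n)})$ and integrating over closed $M$ at the very end, which is essentially equivalent but makes the role of Stokes' theorem explicit.
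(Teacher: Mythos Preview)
Your proposal is correct and follows essentially the same route as the paper: you exploit the splitting $\xi=\xi^{U(n)}+\eta$, $\chi=\chi^{U(n)}+\eta^{\sharp}$, $s_{\alg{su}(n)^{\bot}}=s_{\alg{u}(n)^{\bot}}+s_{\mathbb{R}}$, and compare the $SU(n)$-- and $U(n)$--instances of the master formula. The paper does the comparison at the level of \eqref{eq:divformain}, obtaining the pointwise identity ${\rm div}\,\eta^{\sharp}=-\tfrac12 s_{\mathbb{R}}+\tfrac12 s^{\rm alt}_{\alg{su}(n)^{\bot}}+4\eta(\chi^{U(n)})$ before integrating---exactly the variant you mention at the end---while your main text subtracts directly at the level of \eqref{eq:intformain}; the bilinear expansion and the cross--term computation (yielding $2\eta(\chi^{U(n)})$ from $|\chi|^2$ and another $2\eta(\chi^{U(n)})$ from $|\xi^{\rm alt}|^2-|\xi^{\rm sym}|^2$) match the paper's line by line.
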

\begin{proof}
\begin{equation*}
\chi=\chi^{U(n)}+\sum_i \eta(Je_i)Je_i=\chi^{U(n)}+\eta^{\sharp}
\end{equation*}
and
\begin{align*}
\sum_{i,j}g(\xi_{e_i}e_j,\xi_{e_j}e_i) &=\sum_{i,j}g(\xi^{U(n)}_{e_i}e_j,\xi^{U(n)}_{e_j}e_i)+
2\sum_{i,j}g(\xi^{U(n)}_{e_i}e_j,\eta(Je_j)Je_i)\\
&+\sum_{i,j} \eta(Je_i)\eta(Je_j)g(Je_j,Je_i)\\
&=\sum_{i,j}g(\xi^{U(n)}_{e_i}e_j,\xi^{U(n)}_{e_j}e_i)-2\eta(\chi^{U(n)})+|\eta^{\sharp}|^2.
\end{align*}
Thus
\begin{align*}
{\rm div}\chi &=-\frac{1}{2}s_{\alg{su}(n)^{\bot}}+\frac{1}{2}s^{\rm alt}_{\alg{su}(n)^{\bot}}+|\chi|^2-\sum_{i,j}g(\xi_{e_i}e_j,\xi_{e_j}e_i)\\
&=-\frac{1}{2}s_{\alg{su}(n)^{\bot}}+\frac{1}{2}s^{\rm alt}_{\alg{su}(n)^{\bot}}+|\chi^{U(n)}|^2+2\eta(\chi^{U(n)})+|\eta^{\sharp}|^2\\
&-\sum_{i,j}g(\xi^{U(n)}_{e_i}e_j,\xi^{U(n)}_{e_j}e_i)+2\eta(\chi^{U(n)})-|\eta^{\sharp}|^2\\
&={\rm div}\chi^{U(n)}+\frac{1}{2}s^{\rm alt}_{\alg{su}(n)^{\bot}}-\frac{1}{2}s_{\alg{su}(n)^{\bot}}+\frac{1}{2}s_{\alg{u}(n)^{\bot}}+4\eta(\chi^{U(n)}).
\end{align*}
Hence
\begin{equation}\label{eq:diveta}
{\rm div}\eta^{\sharp}=-\frac{1}{2}s_{\mathbb{R}}+\frac{1}{2}s^{\rm alt}_{\alg{su}(n)^{\bot}}+4\eta(\chi^{U(n)}).
\end{equation}
Assuming $M$ is closed and applying the Stokes theorem we get \eqref{eq:intforSUn}.
\end{proof}

The values of $s_{\mathbb{R}}$ and $s_{\alg{su}(n)^{\bot}}^{\rm alt}$ can be computed explicitly, which gives an alternative version of formula \eqref{eq:intforSUn}. Firstly, introduce two components of the intrinsic torsion, $\xi^{U(n),12}\in\mathcal{W}_1\oplus\mathcal{W}_2$ and $\xi^{U(n),34}\in\mathcal{W}_3\oplus\mathcal{W}_4$.

\begin{cor}\label{cor:SUn}
On a closed $SU(n)$--structure $(M,g,J)$ with the $\mathcal{W}_5$-component induced by the $1$--form $\eta$ we have the following integral formula
\begin{equation*}
\int_M s^{\ast}\,{\rm vol}_M=\int_M (|\xi^{U(n),12}|^2-|\xi^{U(n),34}|^2)\,{\rm vol}_M+4n\int_M\eta(\chi^{U(n)})\,{\rm vol}_M. 
\end{equation*}
In particular, if $\xi^{U(n)}\in\mathcal{W}_1\oplus\mathcal{W}_2\oplus\mathcal{W}_3\oplus\mathcal{W}_5$, then
\begin{equation*}
\int_M s^{\ast}\,{\rm vol}_M=\int_M |\xi^{U(n),1}|^2+|\xi^{U(n),2}|^2-|\xi^{U(n),3}|^2\,{\rm vol}_M.
\end{equation*}
\end{cor}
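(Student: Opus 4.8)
The plan is to derive Corollary \ref{cor:SUn} from the integral formula \eqref{eq:intforSUn} in Proposition \ref{prop:SUn} by computing the two curvature terms $s_{\mathbb{R}}$ and $s^{\rm alt}_{\alg{su}(n)^{\bot}}$ explicitly. First I would compute $s_{\alg{su}(n)^{\bot}}$ using the explicit orthogonal projection $A\mapsto\frac{1}{2}(A+JAJ)-\frac{1}{2n}({\rm tr}AJ)J$ given just before the proposition. Tracing this against the curvature $R(e_j,e_i)$ and summing gives $s_{\alg{su}(n)^{\bot}}=\frac12(s-s^{\ast})+s_{\mathbb{R}}$, where the first part reproduces $s_{\alg{u}(n)^{\bot}}$ as in \eqref{eq:sunbot} and $s_{\mathbb{R}}$ comes from the extra $-\frac{1}{2n}({\rm tr}AJ)J$ term; I would show $s_{\mathbb{R}}=-\frac{1}{2n}\sum_{i,j}{\rm tr}(R(e_j,e_i)J)\,g(Je_i,e_j)$ and simplify this, using the first Bianchi identity, to an expression proportional to $s^{\ast}$ (one expects $s_{\mathbb{R}}$ to be a multiple of $s-s^{\ast}$ or of $s^{\ast}$ depending on normalization — the relevant combination is what appears once $s^{\rm alt}$ is subtracted).

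Next I would evaluate $s^{\rm alt}_{\alg{su}(n)^{\bot}}=\sum_{i,j}g([\xi_{e_i},\xi_{e_j}]_{\alg{su}(n)^{\bot}}e_j,e_i)$. Since $[\alg{u}(n)^{\bot},\alg{u}(n)^{\bot}]\subset\alg{u}(n)$, the $\alg{u}(n)^{\bot}$-part of any bracket of $\alg{su}(n)^{\bot}$-elements lives in the $\mathbb{R}$-summand, so only the $-\frac{1}{2n}({\rm tr}(\cdot)J)J$ part of the projection contributes; splitting $\xi=\xi^{U(n)}+\eta$ and expanding the bracket, the purely-$\eta$ term vanishes (two copies of the same $\mathbb{R}$-direction commute), the cross terms produce $\eta(\chi^{U(n)})$-type expressions, and the purely-$U(n)$ term gives a combination of $|\xi^{U(n),12}|^2$ and $|\xi^{U(n),34}|^2$ via Proposition \ref{prop:GHclassesproperties} (the sign of $\xi_{JX}JY$ relative to $\xi_XY$ distinguishes $\mathcal{W}_1\oplus\mathcal{W}_2$ from $\mathcal{W}_3\oplus\mathcal{W}_4$, which is exactly what controls the trace against $J$). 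Substituting the computed $s_{\mathbb{R}}$ and $s^{\rm alt}_{\alg{su}(n)^{\bot}}$ into \eqref{eq:intforSUn} and collecting the $\eta(\chi^{U(n)})$ terms onto one side yields $\int_M s^{\ast} = \int_M(|\xi^{U(n),12}|^2-|\xi^{U(n),34}|^2) + 4n\int_M\eta(\chi^{U(n)})$, after checking the numerical constants match.

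For the ``in particular'' clause, I would impose $\xi^{U(n)}\in\mathcal{W}_1\oplus\mathcal{W}_2\oplus\mathcal{W}_3\oplus\mathcal{W}_5$. Since $\mathcal{W}_4$ is absent, $\xi^{U(n),34}=\xi^{U(n),3}$, so $|\xi^{U(n),12}|^2-|\xi^{U(n),34}|^2 = |\xi^{U(n),1}|^2+|\xi^{U(n),2}|^2-|\xi^{U(n),3}|^2$ by orthogonality of the Gray--Hervella decomposition. It remains to see that $\eta(\chi^{U(n)})=0$: with $\xi^{U(n)}\in\mathcal{W}_1\oplus\mathcal{W}_2\oplus\mathcal{W}_3$ we have $\chi^{U(n)}=0$ by Proposition \ref{prop:GHclassesproperties}, killing the last term outright.

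The main obstacle I anticipate is the bookkeeping in the second step: pinning down $s^{\rm alt}_{\alg{su}(n)^{\bot}}$ requires carefully tracking which summand ($\alg{u}(n)^{\bot}$ versus $\mathbb{R}$) each bracket lands in, and then matching the resulting traces-against-$J$ of $\xi^{U(n)}$ to the invariant combination $|\xi^{U(n),12}|^2-|\xi^{U(n),34}|^2$ using the characterizations $\xi_{JX}JY=\mp\xi_XY$ from Proposition \ref{prop:GHclassesproperties}; getting all the factors of $\frac{1}{2n}$, $4$, and $n$ to line up with \eqref{eq:intforSUn} is where an arithmetic slip is most likely. The curvature computation for $s_{\mathbb{R}}$ is more routine, essentially a repeat of \eqref{eq:sunbot} with one extra trace term handled by Bianchi symmetry.
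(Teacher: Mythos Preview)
Your plan matches the paper's proof: compute $s_{\mathbb{R}}$ (the paper obtains $s_{\mathbb{R}}=\frac{1}{n}s^{\ast}$ via Bianchi, exactly as you suggest), compute $s^{\rm alt}_{\alg{su}(n)^{\bot}}$ by splitting $\xi=\xi^{U(n)}+\eta$ and analyzing the bracket, then substitute into \eqref{eq:intforSUn}; the special case follows from $\chi^{U(n)}=0$.

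One point needs correcting. Your sentence ``the $\alg{u}(n)^{\bot}$-part of any bracket of $\alg{su}(n)^{\bot}$-elements lives in the $\mathbb{R}$-summand, so only the $-\frac{1}{2n}({\rm tr}(\cdot)J)J$ part of the projection contributes'' is not right as written. What is true is that $[\alg{u}(n)^{\bot},\alg{u}(n)^{\bot}]\subset\alg{u}(n)$, so the purely-$\xi^{U(n)}$ bracket projects to $\alg{su}(n)^{\bot}$ only through the $\mathbb{R}$-slot (and that is indeed where $|\xi^{U(n),12}|^2-|\xi^{U(n),34}|^2$ appears). But the cross brackets $[\eta_X,\xi^{U(n)}_Y]$ are proportional to $J\xi^{U(n)}_Y$, which lies in $\alg{u}(n)^{\bot}$ itself, not in $\mathbb{R}$; their $\mathbb{R}$-projection actually vanishes (since ${\rm tr}(J\xi^{U(n)}_YJ)={\rm tr}\,\xi^{U(n)}_Y=0$). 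So the cross terms contribute to $s^{\rm alt}_{\alg{su}(n)^{\bot}}$ through the $\alg{u}(n)^{\bot}$-part of the projection, not the $\mathbb{R}$-part, and the paper finds this contribution equals $-4\eta(\chi^{U(n)})$. If you had truly used only the $\mathbb{R}$-projection you would miss this term and end up with $8n$ in place of $4n$ in the final formula. Since you go on to say the cross terms produce $\eta(\chi^{U(n)})$-type expressions, you evidently have the right picture; just be aware that the mechanism is the $\alg{u}(n)^{\bot}$-component of the bracket, not the $\mathbb{R}$-component.
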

\begin{proof}
We have
\begin{align*}
s_{\mathbb{R}} &=\sum_{i,j}g(R(e_i,e_j)_{\mathbb{R}}e_j,e_i)\\
&=-\frac{1}{2n}\sum_{i,j}{\rm tr}(R(e_i,e_j)J)g(Je_j,e_i)\\
&=-\frac{1}{2n}\sum_{i,j}g(R(Je_j,e_j)Je_i,e_i)\\
&=\frac{1}{2n}\sum_{i,j}(g(R(Je_i,Je_j)e_j,e_i)+g(R(e_j,Je_i)Je_j,e_i))\\
&=\frac{1}{n}s^{\ast}.
\end{align*} 
To compute $s_{\alg{su}(n)^{\bot}}^{\rm alt}$, it is convenient to determine the component $[\alg{su}(n)^{\bot},\alg{su}(n)^{\bot}]_{\alg{su}(n)^{\bot}}$. For $A=A_0+\lambda J$ and $B=B_0+\mu J$, where $A_0,B_0\in \alg{u}(n)^{\bot}$, by the relation $[\alg{u}(n)^{\bot},\alg{u}(n)^{\bot}]\subset\alg{u}(n)$, we have
\begin{align*}
[A,B]_{\alg{su}(n)^{\bot}} &=[A_0,B_0]_{\mathbb{R}}+\lambda[J,B_0]+\mu[A_0,J]\\
&=-\frac{1}{n}{\rm tr}(A_0B_0J)J+2(\mu A_0-\lambda B_0)J.
\end{align*} 
Hence,
\begin{align*}
s^{\rm alt}_{\alg{su}(n)^{\bot}} &=\sum_{i,j}\left(-\frac{1}{n}{\rm tr}(\xi_{e_i}\xi_{e_j}J)g(Je_j,e_i)+2g(\eta(Je_j)\xi_{e_i}Je_j-\eta(Je_i)\xi_{e_j}Je_j,e_i)\right)\\
&=-\frac{1}{n}\sum_{i,j}g(\xi_{Je_j}\xi_{e_j}Je_i,e_i)+2\sum_{i,j}\left(\eta(Je_j)g(\xi_{e_i}Je_j,e_i)-\eta(Je_i)g(\xi_{e_j}Je_j,e_i)\right)\\
&=\frac{1}{n}\sum_{i,j}g(\xi_{e_j}Je_i,\xi_{Je_j}e_i)-4\eta(\chi^{U(n)})\\
&=-\frac{1}{n}\sum_{i,j}g(\xi_{e_j}e_i,\xi_{Je_j}Je_i)-4\eta(\chi^{U(n)}).
\end{align*}
By Proposition \ref{prop:GHclassesproperties} we get
\begin{equation*}
\sum_{i,j}g(\xi_{e_j}e_i,\xi_{Je_j}Je_i)=-|\xi^{U(n),12}|^2+|\xi^{U(n),34}|^2.
\end{equation*}
now, it suffices to apply \eqref{eq:intforSUn}.
\end{proof}

\begin{rem}
The above integral formula, however formulated in a different way, can be found in \cite{BL}. Let us be more precise. In \cite{BL} authors state some consequences of their formula for almost Hermitian structures with vanishing first Chern class $c_1(M)$. Let us derive the first Chern class in our setting. It is known \cite{AG} that the first Chern form $\gamma$ is given by
\begin{equation*}
8\pi\gamma=-\varphi+2\psi,
\end{equation*}  
where
\begin{equation*}
\varphi(X,Y)={\rm tr}((\nabla_XJ)(\nabla_YJ)),\quad \psi(X,Y)={\rm tr}(R(X,Y)\circ J).
\end{equation*}
It is not hard to see, that
\begin{equation*}
\varphi(X,Y)=4\sum_i g(\xi_X Je_i,\xi_Ye_i),\quad \psi(X,Y)=-2{\rm Ric}^{\ast}(X,JY).
\end{equation*} 
Thus, using the same arguments as before Corollary \ref{cor:SUn}, we get
\begin{equation*}
2\pi{\rm tr}^{\ast}\gamma=|\xi^{U(n),34}|^2-|\xi^{U(n),12}|^2+s^{\ast},
\end{equation*}
where ${\rm tr}^{\ast}\gamma=\sum_i \gamma(e_i,Je_i)$. Notice that vanishing of the first Chern class, i.e., $\gamma=d\alpha$ for some $1$--form, is equivalent to the fact that $\int_M {\rm tr}^{\ast}\gamma=0$. Thus by Corollary \ref{cor:SUn}, $c_1(M)=0$ if and only if $\int_M \eta(\chi^{U(n)})=0$. Finally, note that by \eqref{eq:divergences} we have
\begin{equation*}
\int_M \eta(\chi^{U(n)})\,{\rm vol}_M=-\int_M {\rm div}^G\eta^{\sharp}\,{\rm vol}_M.
\end{equation*}
\end{rem}

\begin{cor}
Consider an $SU(n)$--structure $(M,g,J)$ which is of type $\mathcal{W}_1\oplus\mathcal{W}_5$. Then
\begin{equation*}
\int_M s\,{\rm vol}_M=5\int_M s^{\ast}\,{\rm vol}_M.
\end{equation*}
\end{cor}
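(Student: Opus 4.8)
The plan is to specialize the integral formula from Corollary \ref{cor:SUn} to the class $\mathcal{W}_1\oplus\mathcal{W}_5$ and then relate $s^{\ast}$ to $s$ using the fact that, for a $\mathcal{W}_1$ structure (treated as a $U(n)$--structure), $s-s^{\ast}=|\nabla J|^2$ by Proposition \ref{prop:curvaturerelations}(1). First I would note that when $\xi^{U(n)}\in\mathcal{W}_1$, the second conclusion of Corollary \ref{cor:SUn} applies (since $\mathcal{W}_1\subset\mathcal{W}_1\oplus\mathcal{W}_2\oplus\mathcal{W}_3$), with $\xi^{U(n),2}=\xi^{U(n),3}=0$, giving
\begin{equation*}
\int_M s^{\ast}\,{\rm vol}_M=\int_M |\xi^{U(n),1}|^2\,{\rm vol}_M=\int_M |\xi^{U(n)}|^2\,{\rm vol}_M.
\end{equation*}
Then, using $|\xi^{U(n)}|^2=\tfrac14|\nabla J|^2$ (the identity invoked in the proof of Proposition \ref{prop:curvaturerelations}) together with $s-s^{\ast}=|\nabla J|^2$, I would substitute $|\xi^{U(n)}|^2=\tfrac14(s-s^{\ast})$.

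Putting these together yields
\begin{equation*}
\int_M s^{\ast}\,{\rm vol}_M=\frac14\int_M (s-s^{\ast})\,{\rm vol}_M,
\end{equation*}
hence $4\int_M s^{\ast}=\int_M s-\int_M s^{\ast}$, i.e. $\int_M s=5\int_M s^{\ast}$, which is the assertion.

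The only subtlety — and the one place I would be careful — is the behavior of the $\mathcal{W}_5$ component. One must check that adding a $\mathcal{W}_5$ term does not contribute to the right-hand side of the formula in Corollary \ref{cor:SUn}: indeed the "in particular" clause there already presupposes $\xi^{U(n)}\in\mathcal{W}_1\oplus\cdots\oplus\mathcal{W}_5$ with the $U(n)$--part lying in $\mathcal{W}_1\oplus\mathcal{W}_2\oplus\mathcal{W}_3$, so the $\eta$--contribution has been absorbed and the displayed formula involves only $|\xi^{U(n),k}|^2$ for $k=1,2,3$. Thus for a $\mathcal{W}_1\oplus\mathcal{W}_5$ structure the hypothesis of that clause is met with $\xi^{U(n)}=\xi^{U(n),1}$, and no further work on the $\mathcal{W}_5$ piece is needed. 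The remaining steps are purely algebraic manipulations of the two scalar identities above, so the argument is short.
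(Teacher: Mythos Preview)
Your proposal is correct and follows essentially the same route as the paper. The paper first applies Corollary~\ref{cor:SUn} to obtain $\int_M s^{\ast}=\int_M |\xi^{U(n)}|^2$, and then uses the pointwise identity $\tfrac14(s-s^{\ast})=|\xi^{U(n)}|^2$ for the $\mathcal{W}_1$ part, derived from \eqref{eq:sunbot} and \eqref{eq:pointwisehermitian}; you obtain the very same identity by quoting Proposition~\ref{prop:curvaturerelations}(1) together with $|\xi|^2=\tfrac14|\nabla J|^2$, which is just a repackaging of the same two equations.
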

\begin{proof}
By Corollary \ref{cor:SUn} we have 
\begin{equation*}
\int_M s^{\ast}=\int_M |\xi^{U(n)}|^2.
\end{equation*}
It suffices to notice that by \eqref{eq:sunbot} and \eqref{eq:pointwisehermitian} we have $\frac{1}{4}(s-s^{\ast})=|\xi^{U(n)}|^2$. 
\end{proof}

\subsection{Almost contact metric structures}

Let $(M,g)$ be a $(2n+1)$--dimensional manifold together with a $1$--form $\eta$ (and its dual unit vector field $\zeta$) and $\varphi\in{\rm End}(TM)$ such that
\begin{equation}\label{eq:contactdef}
\varphi^2X=-X+\eta(X)\zeta,\quad g(\varphi X,\varphi Y)=g(X,Y)-\eta(X)\eta(Y).
\end{equation}
Notice that $\varphi$ defines almost complex structure which is $g$--orthogonal on the distribution ${\rm ker}\eta$. Thus we get $U(n)\times 1$--structure. On the level of Lie algebras we have
\begin{equation*}
\alg{so}(2n+1)=\alg{u}(n)\oplus\alg{u}(n)^{\bot},
\end{equation*}
where $\alg{u}(n)^{\bot}$ is isomorphic to the space of block matrices of the form
\begin{equation}\label{unbotodd}
\left(\begin{array}{cc} B & a \\ -a^{\top} & 0 \end{array}\right),\quad B\in\alg{u}(n)^{\bot}\subset\alg{so}(2n),\quad a\in\mathbb{R}^{2n}.
\end{equation}
Since here $\zeta=e_{2n+1}$, $\varphi$ is a natural complex structure on $\mathbb{R}^{2n}$ and zero on $\zeta$, it is easy to see that the orthogonal projection from $\alg{so}(2n+1)$ onto $\alg{u}(n)^{\bot}$ equals
\begin{equation}\label{eq:projcontact}
A\mapsto \frac{1}{2}(A+\varphi A \varphi+\zeta^{\top}A\otimes \zeta+\zeta^{\top}\otimes A\zeta).
\end{equation}
Rewriting this formula with the use of the one--form $\eta (\equiv \zeta^{\top})$, the intrinsic torsion satisfies the following relation
\begin{equation}\label{eq:inttorcontactrel}
\xi_XY=\varphi(\xi_X\varphi Y)+\eta(\xi_XY)\zeta+\eta(Y)\xi_X\zeta.
\end{equation}
This, moreover, implies the formula for the intrinsic torsion \cite{GMC2}
\begin{equation}\label{eq:inttorcontact}
\xi_XY=\frac{1}{2}(\nabla_X\varphi)\varphi Y+\frac{1}{2}(\nabla_X\eta)Y\cdot \zeta-\eta(Y)\nabla_X\zeta.
\end{equation}
By \eqref{eq:contactdef} it follows that
\begin{equation*}
\varphi(\nabla_X\varphi)Y=-(\nabla_X\varphi)\varphi Y+(\nabla_X\eta)Y\cdot \zeta+\eta(Y)\nabla_X\zeta.
\end{equation*}
Thus, we may write the intrinsic torsion in an alternative way \cite{GMC2}
\begin{equation}\label{eq:inttorcontactalternative}
\xi_XY=-\frac{1}{2}\varphi(\nabla_X\varphi)Y+(\nabla_X\eta)Y\cdot \zeta-\frac{1}{2}\eta(Y)\nabla_X\eta.
\end{equation}
Hence, the characteristic vector field in this case equals
\begin{equation}\label{eq:charvfcontact}
\chi=-\frac{1}{2}\varphi({\rm div}\varphi)+({\rm div}\zeta)\zeta-\frac{1}{2}\nabla_{\zeta}\zeta.
\end{equation}

The condition $\xi\in T^{\ast}M\otimes\alg{u}(n)^{\bot}(TM)$ is equivalent to the relation \eqref{eq:inttorcontactrel}. Decomposing the space $T^{\ast}M\otimes\alg{u}(n)^{\bot}(TM)$ into irreducible $U(n)\times 1$--modules, we get twelve classes $\mathcal{C}_1,\ldots,\mathcal{C}_{12}$ \cite{CG}. First four are isomorphic to Gray--Hervella classes $\mathcal{W}_1,\ldots,\mathcal{W}_4$. 

\begin{rem}\label{rem:correspondence}
Note that in \cite{CG} types of almost contact metric structures, i.e. irreducible modules of $T^{\ast}M\otimes\alg{u}(n)^{\bot}(TM)$, were classified with respect to $\alpha(X,Y,Z)=(\nabla_X\Phi)(Y,Z)$, where $\Phi(X,Y)=g(X,\varphi Y)$. It is well known that this is equivalent to considering the intrinsic torsion as a map $\beta(X,Y,Z)=g(\xi_XY,Z)$. The correspondence follows from the fact that $\nabla_X\Phi=\xi_X\Phi$, since $\nabla^{U(n)\times 1}\Phi=0$. This implies a direct relation
\begin{equation}\label{eq:contactcorrespondence}
\alpha(X,Y,Z)=\beta(X,Y,\varphi Z)-\beta(X,Z,\varphi Y).
\end{equation}
Note that we should be careful with studying irreducible modules $\mathcal{C}_1,\ldots,\mathcal{C}_{12}$, since the correspondence $\alpha\leftrightarrow\beta$ interchanges some of the modules, which is underlined in the table below.
\begin{table}[!h]
\caption{Module correspondence via $\alpha\leftrightarrow\beta$}
\begin{tabular}{|c|c|c|c|c|c|c|c|c|c|c|c|c|}
\hline
$\alpha$ & $\mathcal{C}_1$ & $\mathcal{C}_2$ & $\mathcal{C}_3$ & $\mathcal{C}_4$ & $\mathcal{C}_5$ & $\mathcal{C}_6$ & $\mathcal{C}_7$ & $\mathcal{C}_8$ & $\mathcal{C}_9$ & $\mathcal{C}_{10}$ & $\mathcal{C}_{11}$ & $\mathcal{C}_{12}$ \\
\hline
$\beta$ & $\mathcal{C}_1$ & $\mathcal{C}_2$ & $\mathcal{C}_3$ & $\mathcal{C}_4$ & $\mathcal{C}_6$ & $\mathcal{C}_5$ & $\mathcal{C}_8$ & $\mathcal{C}_7$ & $\mathcal{C}_9$ & $\mathcal{C}_{10}$ & $\mathcal{C}_{11}$ & $\mathcal{C}_{12}$ \\
\hline
\end{tabular} 
\end{table}
\end{rem}

Let us describe these spaces in more detail. Put
\begin{equation*}
\mathcal{D}_1=\mathcal{C}_1\oplus\ldots\oplus\mathcal{C}_4,\quad
\mathcal{D}_2=\mathcal{C}_5\oplus\ldots\oplus\mathcal{C}_{11},\quad
\mathcal{D}_3=\mathcal{C}_{12}.
\end{equation*}
Each of above spaces is characterized as follows \cite{CG}:
\begin{enumerate}
\item {\bf Class} $\mathcal{D}_1$: $\xi_{\zeta}Y=\xi_X\zeta=0$. Applying formluas \eqref{eq:inttorcontact} and \eqref{eq:inttorcontactalternative} we obtain $\nabla\zeta=0$ and hence $\chi=-\frac{1}{2}\varphi({\rm div}\varphi)$, as expected, since in this case, being not very precise, $\xi$ is the intrinsic torsion on the almost Hermitian structure ${\rm ker}\eta$.
\item {\bf Class} $\mathcal{D}_2$: $\xi_XY=\eta(X)\xi_{\zeta}Y+\eta(Y)\xi_X\zeta+\eta(\xi_XY)\zeta$. Taking $X=Y=\zeta$ we get $\xi_{\zeta}\zeta=-\eta(\xi_{\zeta}\zeta)\zeta$ and $\eta$ to both sides we get $\xi_{\zeta}\zeta=0$. Taking $X=Y=e_i$ we obtain 
\begin{equation*}
{\rm pr}_{{\rm ker}\eta}\chi=0.
\end{equation*}
Hence, $\chi\in{\rm span}\,\zeta$, so by \eqref{eq:charvfcontact}, $\chi=({\rm div}\zeta)\zeta$. Moreover, it is easy to see that for $X,Y\in{\rm ker}\eta$
\begin{equation*}
\xi_XY\in{\rm span}\,\zeta,\quad\xi_{\zeta}Y\in{\rm ker}\eta,\quad \xi_X\zeta\in{\rm ker}\eta,\quad\xi_{\zeta}\zeta=0.
\end{equation*}
Thus, by the formulas \eqref{eq:inttorcontact} and \eqref{eq:inttorcontactalternative} for the intrinsic torsion  we get ($X,Y\in{\rm ker}\eta$)
\begin{align*}
\xi_XY &=(\nabla_X\eta)Y\cdot\zeta,\\
\xi_{\zeta}Y &=\frac{1}{2}(\nabla_{\zeta}\varphi)\varphi Y,\\
\xi_X\zeta &=-\nabla_X\zeta.
\end{align*} 
\item {\bf Class} $\mathcal{D}_3$: $\xi_XY=\eta(X)\eta(Y)\xi_{\zeta}\zeta+\eta(X)\eta(\xi_{\zeta}Y)\zeta$. Therefore, for $X,Y\in{\rm ker}\eta$
\begin{equation*}
\xi_XY=0,\quad \xi_X\zeta=0,\quad\xi_{\zeta}Y\in{\rm span}\,\zeta,\quad\xi_{\zeta}\zeta\in{\rm ker}\eta.
\end{equation*}
Hence, $\chi=\xi_{\zeta}\zeta=-\nabla_{\zeta}\zeta$ and $|\chi|^2+|\xi^{\rm alt}|^2-|\chi^{\rm sym}|^2=0$.
\end{enumerate}

Analogously as in an almost Hermitian case, let us relate divergence and integral formulas \eqref{eq:divformain}, \eqref{eq:intformain} to quadratic invariants of almost contact metric structure. In this case the space of quadratic invariants is generated by $18$ invariants \cite{CG}. For our purposes we need only some of them:
\begin{align*}
& i_1=\sum_{i,j,k}\alpha(e_i,e_j,e_k)^2, && i_2=\sum_{i,j,k}\alpha(e_i,e_j,e_k)\alpha(e_j,e_i,e_k)^2,\\
& i_4=\sum_{i,j,k}\alpha(e_i,e_i,e_k)\alpha(e_j,e_j,e_k), && i_5=\sum_{j,k}\alpha(\zeta,e_j,e_k)^2,\\
& i_6=\sum_{i,j,k}\alpha(e_i,\zeta,e_k)^2, && i_7=\sum_{j,k}\alpha(\zeta,e_j,e_k)\alpha(e_j,\zeta,e_k),\\
& i_8=\sum_{i,j}\alpha(e_i,e_j,\zeta)\alpha(e_j,e_i,\zeta), && i_{10}=\sum_{i,j}\alpha(e_i,e_i,\zeta)\alpha(e_j,e_j,\zeta),\\
& i_{12}=\sum_{i,j}\alpha(e_i,e_j,\zeta)\alpha(\varphi(e_j),\varphi(e_i),\zeta), && i_{14}=\sum_{i,j}\alpha(e_i,\varphi(e_i),\zeta)\alpha(e_j,\varphi(e_j),\zeta),\\ 
& i_{16}=\sum_k\alpha(\zeta,\zeta,e_k)^2, && i_{17}=\sum_{i,k}\alpha(e_i,e_i,e_k)\alpha(\zeta,\zeta,e_k),
\end{align*} 
where $\alpha(X,Y,Z)=g(\xi_XY,Z)$ and $i,j=1,\ldots,2n$. Notice that
\begin{align*}
|\xi|^2 &=i_1+i_5+2i_6+2i_{16},\\
|\chi|^2 &=i_4+i_{10}+i_{16},\\
|\xi^{\rm sym}|^2-|\xi^{\rm alt}|^2 &=i_2+i_7+i_8+i_{16}.
\end{align*}

Let us now compute scalar curvature 'components'. Define the $\ast$--scalar curvature as follows
\begin{equation*}
s^{\ast}={\rm tr}\,{\rm Ric}^{\ast}=\sum_{i,j}g(R(e_i,e_j)\varphi e_j,\varphi e_i).
\end{equation*}

\begin{lem}\label{lem:scalarcurvaturecompcontact}
The following relations hold
\begin{align*}
s^{\rm alt}_{\alg{u}(n)^{\bot}} &=\frac{1}{2}(i_8-i_{10}+i_{12}+i_{14})+2(i_7-i_{17})\quad\textrm{and}\\ 
s_{\alg{u}(n)^{\bot}} &=\frac{1}{2}(s-s^{\ast})+{\rm Ric}(\zeta,\zeta).
\end{align*}
\end{lem}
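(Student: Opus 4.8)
The plan is to compute the two quantities $s^{\rm alt}_{\alg{u}(n)^{\bot}}$ and $s_{\alg{u}(n)^{\bot}}$ directly from their definitions, using the explicit orthogonal projection \eqref{eq:projcontact} onto $\alg{u}(n)^{\bot}\subset\alg{so}(2n+1)$ and the curvature identities of the previous section. For the first identity I would start from $s^{\rm alt}_{\alg{u}(n)^{\bot}}=\sum_{i,j}g([\xi_{e_i},\xi_{e_j}]_{\alg{u}(n)^{\bot}}e_j,e_i)$, where the index $j$ now runs over a full adapted orthonormal basis including $\zeta=e_{2n+1}$. The key algebraic step is to apply the projection formula \eqref{eq:projcontact} to the endomorphism $[\xi_{e_i},\xi_{e_j}]$: since $[\alg{u}(n)^{\bot},\alg{u}(n)^{\bot}]\cap\alg{so}(2n)\subset\alg{u}(n)$ by the almost Hermitian relation on $\ker\eta$, the only surviving $\alg{u}(n)^{\bot}$-contributions come from the $\varphi A\varphi$ term and the two rank-one terms $\zeta^{\top}A\otimes\zeta$ and $\zeta^{\top}\otimes A\zeta$. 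Expanding $g((\varphi[\xi_{e_i},\xi_{e_j}]\varphi)e_j,e_i)$ and the two boundary terms and re-expressing everything through $\alpha(X,Y,Z)=g(\xi_XY,Z)$, one matches the bookkeeping against the invariants $i_7,i_8,i_{10},i_{12},i_{14},i_{17}$; I expect the $\varphi$-conjugated piece to produce $\tfrac12(i_8+i_{12}+i_{14})$ up to sign, the $\zeta$-terms to produce $-\tfrac12 i_{10}+2i_7$, and a $-2i_{17}$ correction to appear from the interaction between the $\varphi A\varphi$ term and the trace part of $\xi_{e_i}e_i$.

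For the second identity the cleanest route is to use the decomposition $s=s_{\alg{u}(n)}+s_{\alg{u}(n)^{\bot}}$ together with an explicit evaluation of $s_{\alg{u}(n)^{\bot}}=\sum_{i,j}g(R(e_j,e_i)_{\alg{u}(n)^{\bot}}e_i,e_j)$ via \eqref{eq:projcontact} applied to $R(e_j,e_i)$. Splitting the adapted basis into the $2n$ horizontal vectors $e_a\in\ker\eta$ and $\zeta$, the $\tfrac12(R+\varphi R\varphi)$ part restricted to the horizontal block reproduces exactly the almost Hermitian computation \eqref{eq:sunbot}, giving $\tfrac12(s-s^{\ast})$ but with the horizontal-only Ricci traces; the discrepancy between the full scalar/$\ast$-scalar curvatures and their horizontal truncations is precisely accounted for by ${\rm Ric}(\zeta,\zeta)$, and the rank-one $\zeta$-terms in the projection contribute the remaining $\zeta$-direction curvature. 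So I would carefully track which summands in $s$ and $s^{\ast}$ involve $\zeta$ and verify that the leftover reassembles into ${\rm Ric}(\zeta,\zeta)$ with coefficient $1$.

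The main obstacle I anticipate is the sign and coefficient bookkeeping in the first formula: the projection \eqref{eq:projcontact} has four terms, $[\xi_{e_i},\xi_{e_j}]$ is itself a commutator of two such projected endomorphisms, and the skew-symmetry of $\xi_X$ together with the antisymmetry in $(e_i,e_j)$ creates many near-cancellations, so it is easy to misplace a factor of $2$ or a sign when converting $g(\varphi\xi_X\varphi\xi_Y\varphi e_j,e_i)$-type expressions into the $\alpha$-invariants, especially in distinguishing $i_{12}$ (a $\varphi$-twisted pairing) from $i_8$. A useful sanity check is to specialize to the three classes $\mathcal{D}_1,\mathcal{D}_2,\mathcal{D}_3$ described above—e.g. on $\mathcal{D}_2$ one has $\xi_\zeta\zeta=0$ and the stated vanishing/support properties, which kills $i_{16}$ and forces $i_{10}=({\rm div}\zeta)^2$, so the formula should collapse to something consistent with the codimension-one Walczak relation of Proposition \ref{prop:Walczakcodim1}—and to cross-check the horizontal part against \eqref{eq:sunbot}. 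The second identity is comparatively routine once the first projection computation is organized, since it only requires separating horizontal and vertical indices.
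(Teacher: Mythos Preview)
Your plan for the second identity is exactly what the paper does: apply the projection \eqref{eq:projcontact} to $R(e_i,e_j)$, separate the horizontal and $\zeta$ indices, and recognise the horizontal piece as $\tfrac12(s-s^{\ast})$ while the two rank--one terms reassemble into ${\rm Ric}(\zeta,\zeta)$. That part is fine.

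For $s^{\rm alt}_{\alg{u}(n)^{\bot}}$ the overall strategy (insert the projection and expand) also agrees with the paper, but your ``key algebraic step'' contains an error. You assert that $[\alg{u}(n)^{\bot},\alg{u}(n)^{\bot}]\cap\alg{so}(2n)\subset\alg{u}(n)$, and conclude that only the $\varphi A\varphi$ term and the two rank--one $\zeta$--terms of the projection survive. This inclusion is false. Writing an element of $\alg{u}(n)^{\bot}\subset\alg{so}(2n+1)$ in the block form $(B,a)$ of \eqref{unbotodd}, one has
\[
[(B,a),(\tilde B,\tilde a)]=\bigl([B,\tilde B]-a\wedge\tilde a,\; B\tilde a-\tilde B a\bigr),
\]
where $(a\wedge\tilde a)v=\langle\tilde a,v\rangle a-\langle a,v\rangle\tilde a$. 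It is true that $[B,\tilde B]\in\alg{u}(n)$ by the Hermitian relation on $\ker\eta$, but the rank--two endomorphism $a\wedge\tilde a$ does \emph{not} commute with $\varphi_0$ in general, so its $\alg{u}(n)^{\bot}$--component $\tfrac12\bigl(a\wedge\tilde a+\varphi_0(a\wedge\tilde a)\varphi_0\bigr)$ is nonzero and must be kept. The paper organises the whole computation around this block formula: the $a\wedge\tilde a$ part yields $\tfrac12(i_8-i_{10}+i_{12}+i_{14})$, and the off--diagonal column $B\tilde a-\tilde B a$ yields $2(i_7-i_{17})$. In particular your predicted grouping, with $i_{10}$ coming from the $\zeta$--terms and $i_{17}$ from an interaction with the $\varphi A\varphi$ term, is not how the invariants actually arise; all four of $i_8,i_{10},i_{12},i_{14}$ come from the horizontal block via $a\wedge\tilde a$, while $i_7$ and $i_{17}$ come from the last column. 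The fix is simply to drop the false inclusion and compute the commutator in block form first, then project; this avoids the term--by--term application of \eqref{eq:projcontact} that you anticipate will cause sign trouble.
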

\begin{proof}
Denote by $(B,a)$ an element in $\alg{u}(n)^{\bot}$ of the form \eqref{unbotodd}. Note that $a=(B,a)e_{2n+1}\in\mathbb{R}^{2n}\subset\mathbb{R}^{2n+1}$.  Then,
\begin{equation*}
[(B,a),(\tilde{B},\tilde{a})]=(-a\wedge \tilde{a},B\tilde{a}-\tilde{B}a),
\end{equation*}
where $a\wedge\tilde{a}$ is an endomorphism of $\mathbb{R}^{2n}$ given by $(a\wedge \tilde{a})v=\skal{\tilde{a}}{v}a-\skal{a}{v}\tilde{a}$. Thus
\begin{equation*}
[(B,a),(\tilde{B},\tilde{a})]_{\alg{u}(n)^{\bot}}
=(-\frac{1}{2}(a\wedge\tilde{a}+\varphi_0(a\wedge\tilde{a})\varphi_0,B\tilde{a}-\tilde{B}a),
\end{equation*}
where $\varphi_0$ is a restriction of $\varphi$ to $\mathbb{R}^{2n}$, thus defining an almost complex structure. 

By these considerations we are ready to compute $s_{\alg{u}(n)^{\bot}}^{\rm alt}$. We have (here $i,j=1,\ldots,2n$)
\begin{align*}
s^{\rm alt}_{\alg{u}(n)^{\bot}} &=-\frac{1}{2}\sum_{i,j}g((\xi_{e_i}z\wedge\xi_{e_j}z+\varphi(\xi_{e_i}z\wedge\xi_{e_j}z)\varphi)e_j,e_i)+2\sum_i g(\xi_{e_i}\xi_{\zeta}\zeta-\xi_{\zeta}\xi_{e_i}\zeta,e_i)\\
&=-\frac{1}{2}\sum_{i,j}\bigg(g(\xi_{e_i}e_i,\zeta)g(\xi_{e_j}e_i,\zeta)-g(\xi_{e_i}e_j,\zeta)g(\xi_{e_j}e_i,\zeta)\\
&-g(\xi_{e_i}\varphi(e_i),\zeta)g(\xi_{e_j}\varphi(e_j),\zeta)+g(\xi_{e_i}\varphi(e_j),\zeta)g(\xi_{e_j}\varphi(e_i),\zeta)\bigg)\\
&+2\sum_i (-g(\xi_{\zeta}\zeta,\xi_{e_i}e_i)+g(\xi_{e_i}\zeta,\xi_{\zeta}e_i))\\
&=\frac{1}{2}(-i_{10}+i_8+i_{14}+i_{12})+2(-i_{17}+i_7).
\end{align*}
For $s_{\alg{u}(n)^{\bot}}$ by \eqref{eq:projcontact} we have
\begin{align*}
s_{\alg{u}(n)^{\bot}} &=\sum_{i,j}g(R(e_i,e_j)_{\alg{u}(n)^{\bot}}e_j,e_i)\\
&=\frac{1}{2}(s-s^{\ast})+\frac{1}{2}\sum_{i,j}(\eta(R(e_i,e_j)e_j)\eta(e_i)+\eta(e_j)g(R(e_i,e_j)\zeta,e_i)\\
&=\frac{1}{2}(s-s^{\ast})+\sum_i(g(R(\zeta,e_j)e_j,\zeta)\\
&=\frac{1}{2}(s-s^{\ast})+{\rm Ric}(\zeta,\zeta).\qedhere
\end{align*}
\end{proof}

By Lemma \ref{lem:scalarcurvaturecompcontact} and above considerations we may rewrite formula \eqref{eq:divformain} as
\begin{equation}\label{eq:divforinvariantscontact}
{\rm div}\chi=-i_2+i_4-\frac{3}{4}i_8+\frac{3}{4}i_{10}+\frac{1}{4}i_{12}-\frac{1}{4}i_{14}-i_{17}-\frac{1}{4}(s-s^{\ast})-\frac{1}{2}{\rm Ric}(\zeta,\zeta).
\end{equation} 
Moreover, by a classification of each module $\mathcal{C}_i$ by quadratic invariants \cite[Table I]{CG} we have the following observation.

\begin{prop}\label{prop:contactinvariants}
We have:
\begin{enumerate}
\item the characteristic vector field $\chi$ vanishes if and only if the intrinsic torsion belongs to $\mathcal{C}_1\oplus\mathcal{C}_2\oplus\mathcal{C}_3\oplus\mathcal{C}_6\oplus\mathcal{C}_7\oplus\mathcal{C}_8\oplus\mathcal{C}_9\oplus\mathcal{C}_{10}\oplus\mathcal{C}_{11}$,
\item $|\xi^{\rm sym}|^2-|\xi^{\rm alt}|^2=0$ if and only if the intrinsic torsion belongs to $\mathcal{C}_3\oplus\mathcal{C}_4\oplus\mathcal{C}_{11}$,
\item $s^{\rm alt}_{\alg{u}(n)^{\bot}}=0$ if and only if the intrinsic torsion belongs to $\mathcal{C}_1\oplus\mathcal{C}_2\oplus\mathcal{C}_3\oplus\mathcal{C}_4\oplus\mathcal{C}_7\oplus\mathcal{C}_9\oplus\mathcal{C}_{10}\oplus\mathcal{C}_{11}\oplus\mathcal{C}_{12}$.
\end{enumerate}
\end{prop}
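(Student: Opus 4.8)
\textbf{Proof proposal for Proposition \ref{prop:contactinvariants}.}

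The plan is to reduce all three statements to a single bookkeeping exercise against the invariant-valued table \cite[Table I]{CG}, using the three identities recorded just above the proposition:
\begin{equation*}
|\chi|^2=i_4+i_{10}+i_{16},\qquad |\xi^{\rm sym}|^2-|\xi^{\rm alt}|^2=i_2+i_7+i_8+i_{16},\qquad s^{\rm alt}_{\alg{u}(n)^{\bot}}=\tfrac{1}{2}(i_8-i_{10}+i_{12}+i_{14})+2(i_7-i_{17}).
\end{equation*}
Since the twelve classes $\mathcal{C}_1,\ldots,\mathcal{C}_{12}$ are mutually orthogonal $U(n)\times 1$--submodules and each of the $i_\ell$ is a quadratic invariant (hence additive over the orthogonal direct sum, with $i_\ell=\sum_k i_\ell^{(k)}$), each of the three expressions above splits as a sum over $k$ of its restriction to $\mathcal{C}_k$. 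A quadratic invariant vanishes on the whole intrinsic torsion of a structure of type $\bigoplus_{k\in S}\mathcal{C}_k$ if and only if it vanishes on each summand $\mathcal{C}_k$, $k\in S$; and because on an irreducible module the space of quadratic invariants is one-dimensional (Remark \ref{rem:invariants}, \cite{BGM}), the restriction of any one of our three expressions to $\mathcal{C}_k$ is a fixed scalar multiple $c_k|\xi^{\mathcal{C}_k}|^2$ of the norm square, so it is either identically zero on $\mathcal{C}_k$ or nonzero for every nonzero element of $\mathcal{C}_k$. Hence the characterizing set $S$ in each case is exactly $\{k: c_k=0\}$.

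First I would treat $\chi=0$. One must identify for which $k$ the combination $i_4+i_{10}+i_{16}$ vanishes on $\mathcal{C}_k$. Reading off \cite[Table I]{CG} (keeping in mind the $\alpha\leftrightarrow\beta$ relabelling flagged in Remark \ref{rem:correspondence}, which swaps $\mathcal{C}_5\leftrightarrow\mathcal{C}_6$ and $\mathcal{C}_7\leftrightarrow\mathcal{C}_8$), the modules on which $\chi$ can be nonzero are precisely $\mathcal{C}_4,\mathcal{C}_5,\mathcal{C}_{12}$: these are the classes carrying a nonzero ``trace'' part $i_4$, resp. the $\zeta$-direction traces $i_{10}$, $i_{16}$. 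This can be seen directly from the structural descriptions in the text — for $\mathcal{D}_1$-type classes $\chi=-\tfrac12\varphi(\operatorname{div}\varphi)$ (nonvanishing only on $\mathcal{W}_4\cong\mathcal{C}_4$ by the almost Hermitian analysis, Proposition \ref{prop:GHclassesproperties}); for $\mathcal{D}_2$ one has $\chi=(\operatorname{div}\zeta)\zeta$, nonzero only on the part of $\mathcal{D}_2$ carrying $\operatorname{div}\zeta$, which is $\mathcal{C}_5$; and for $\mathcal{D}_3=\mathcal{C}_{12}$ one has $\chi=-\nabla_\zeta\zeta$, generically nonzero. Complementing, $\chi=0$ on exactly $\mathcal{C}_1\oplus\mathcal{C}_2\oplus\mathcal{C}_3\oplus\mathcal{C}_6\oplus\mathcal{C}_7\oplus\mathcal{C}_8\oplus\mathcal{C}_9\oplus\mathcal{C}_{10}\oplus\mathcal{C}_{11}$, which is (1).

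For (2), I would similarly locate the zero set of $i_2+i_7+i_8+i_{16}$ on each $\mathcal{C}_k$. On the first four classes this reduces to the almost Hermitian computation: $i_2$ vanishes on $\mathcal{W}_3$ and $\mathcal{W}_4$ and is nonzero on $\mathcal{W}_1,\mathcal{W}_2$ (Table 1 in the excerpt gives $i_2^{(1)}=-i_1^{(1)}$, $i_2^{(2)}=\tfrac12 i_1^{(2)}$, $i_2^{(3)}=i_2^{(4)}=0$), so among $\mathcal{C}_1,\ldots,\mathcal{C}_4$ only $\mathcal{C}_3,\mathcal{C}_4$ survive; for the remaining modules one checks against \cite[Table I]{CG} that the relevant combination vanishes exactly on the single class $\mathcal{C}_{11}$ and is nonzero on $\mathcal{C}_5,\ldots,\mathcal{C}_{10},\mathcal{C}_{12}$. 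Note the structural computations already provide partial confirmation: for $\mathcal{D}_3=\mathcal{C}_{12}$ the text records $|\chi|^2+|\xi^{\rm alt}|^2-|\xi^{\rm sym}|^2=0$, and since $\chi\neq0$ generically on $\mathcal{C}_{12}$ this forces $|\xi^{\rm sym}|^2-|\xi^{\rm alt}|^2\neq0$ there, so $\mathcal{C}_{12}$ is correctly excluded. Thus (2) gives $\mathcal{C}_3\oplus\mathcal{C}_4\oplus\mathcal{C}_{11}$. For (3) one repeats the same with $s^{\rm alt}_{\alg{u}(n)^{\bot}}=\tfrac12(i_8-i_{10}+i_{12}+i_{14})+2(i_7-i_{17})$: on $\mathcal{C}_1\oplus\mathcal{C}_2\oplus\mathcal{C}_3\oplus\mathcal{C}_4$ the bracket $[\alg u(n)^\bot,\alg u(n)^\bot]\subset\alg u(n)$ already forces $s^{\rm alt}_{\alg u(n)^\bot}=0$ (as in the almost Hermitian case), and for the other classes one reads off from \cite[Table I]{CG} that the combination is nonzero precisely on $\mathcal{C}_5,\mathcal{C}_6,\mathcal{C}_8$ and vanishes on $\mathcal{C}_7,\mathcal{C}_9,\mathcal{C}_{10},\mathcal{C}_{11},\mathcal{C}_{12}$, yielding the stated set $\mathcal{C}_1\oplus\mathcal{C}_2\oplus\mathcal{C}_3\oplus\mathcal{C}_4\oplus\mathcal{C}_7\oplus\mathcal{C}_9\oplus\mathcal{C}_{10}\oplus\mathcal{C}_{11}\oplus\mathcal{C}_{12}$.

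The main obstacle is purely organizational rather than conceptual: one must be scrupulous about the module relabelling of Remark \ref{rem:correspondence} when transcribing from \cite[Table I]{CG}, since the classification there is phrased in terms of $\alpha(X,Y,Z)=(\nabla_X\Phi)(Y,Z)$ while our invariants $i_\ell$ are written in terms of $\beta(X,Y,Z)=g(\xi_XY,Z)$, and the pairs $(\mathcal{C}_5,\mathcal{C}_6)$ and $(\mathcal{C}_7,\mathcal{C}_8)$ are interchanged under this identification; getting the wrong member of either pair would spoil (1) and (3). Beyond that, the argument is a finite check over twelve one-dimensional invariant spaces, cross-checked against the explicit structural formulas for $\mathcal{D}_1,\mathcal{D}_2,\mathcal{D}_3$ recorded above.
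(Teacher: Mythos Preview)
Your approach is essentially the same as the paper's: the paper offers no detailed argument and simply states that the proposition follows ``by a classification of each module $\mathcal{C}_i$ by quadratic invariants \cite[Table I]{CG}'', which is precisely the table-lookup you carry out, and your added justification (additivity of quadratic invariants over the irreducible decomposition, one-dimensionality of invariants on each $\mathcal{C}_k$, and the $\alpha\leftrightarrow\beta$ relabelling from Remark~\ref{rem:correspondence}) only makes explicit what the paper leaves implicit. One small point worth keeping in mind when writing this up: for parts (2) and (3) the quadratic forms $i_2+i_7+i_8+i_{16}$ and $\tfrac12(i_8-i_{10}+i_{12}+i_{14})+2(i_7-i_{17})$ are indefinite, so the ``only if'' cannot be read pointwise (a mixture of, say, $\mathcal{C}_1$ and $\mathcal{C}_2$ components can make $|\xi^{\rm sym}|^2-|\xi^{\rm alt}|^2$ vanish by cancellation); your phrasing in terms of ``structures of type $\bigoplus_{k\in S}\mathcal{C}_k$'' already captures the intended module-level statement, namely that the listed sum is the unique maximal submodule on which the expression vanishes identically.
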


We are ready to interpret, at least in some cases, the formula \eqref{eq:divforinvariantscontact} in a geometric way. Denote by $B^{\eta}$ and $T^{\eta}$ the (symmetric) second fundamental form and integrability tensor of ${\rm ker}\eta$, respectively (see the section on almost product structures). 

\begin{prop}\label{prop:contact1}
Let $(M,g,\varphi,\eta,\zeta)$ be an almost contact metric structure with the intrinsic torsion $\xi$.
\begin{enumerate}
\item If $\xi\in\mathcal{D}_2$, then
\begin{equation*}
{\rm div}(({\rm div}\zeta)\zeta)=({\rm div}\zeta)^2+|T^{\eta}|^2-|B^{\eta}|^2+\frac{1}{2}s^{\rm alt}_{\alg{u}(n)^{\bot}}-\frac{1}{4}(s-s^{\ast})-\frac{1}{2}{\rm Ric}(\zeta,\zeta),
\end{equation*}
or, equivalently,
\begin{equation*}
{\rm div}(\nabla_{\zeta}\zeta)=\frac{1}{2}s^{\rm alt}_{\alg{u}(n)^{\bot}}+\frac{1}{2}{\rm Ric}(\zeta,\zeta)-\frac{1}{4}(s-s^{\ast}).
\end{equation*}
If additionally, $M$ is closed, then the following integral formula holds
\begin{equation*}
\int_M {\rm Ric}(\zeta,\zeta)\,{\rm vol}_M=\frac{1}{2}\int_M s-s^{\ast}-2s^{\rm alt}_{\alg{u}(n)^{\bot}}\,{\rm vol}_M=\int_M ({\rm div}\zeta)^2+|T^{\eta}|^2-|B^{\eta}|^2\,{\rm vol}_M.
\end{equation*}
\item If $\xi\in\mathcal{C}_6\oplus\ldots\oplus\mathcal{C}_{11}$, then
\begin{equation*}
|T^{\eta}|^2-|B^{\eta}|^2=-\frac{1}{2}s^{\rm alt}_{\alg{u}(n)^{\bot}}+\frac{1}{4}(s-s^{\ast})+\frac{1}{2}{\rm Ric}(\zeta,\zeta)
\end{equation*}
\item If $\xi\in\mathcal{C}_{11}$, then $s-s^{\ast}=-2{\rm Ric}(\zeta,\zeta)$ and $\int_M s-s^{\ast}\,{\rm vol}_M=\int_M {\rm Ric}(\zeta,\zeta)\,{\rm vol}_M=0$.
\item If $\xi\in\mathcal{C}_{12}$ then
\begin{equation*}
{\rm div}(\nabla_{\zeta}\zeta)=\frac{1}{4}(s-s^{\ast})+\frac{1}{2}{\rm Ric}(\zeta,\zeta).
\end{equation*}
If, additionally, $M$ is closed, then 
\begin{equation*}
\int_M s-s^{\ast}\,{\rm vol}_M=-2\int_M {\rm Ric}(\zeta,\zeta)\,{\rm vol}.
\end{equation*}
\end{enumerate}
\end{prop}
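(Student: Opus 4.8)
The plan is to obtain all four statements as specializations of the master divergence formula \eqref{eq:divformain} (equivalently \eqref{eq:divforinvariantscontact}) for the $U(n)\times 1$--structure, feeding in three pieces of data: the value of the characteristic vector field $\chi$ read off from the defining relations of the classes $\mathcal{D}_2,\mathcal{D}_3$; the identity $s_{\alg{u}(n)^{\bot}}=\tfrac12(s-s^{\ast})+{\rm Ric}(\zeta,\zeta)$ from Lemma \ref{lem:scalarcurvaturecompcontact}; and the vanishing of $\chi$, of $|\xi^{\rm sym}|^2-|\xi^{\rm alt}|^2$ and of $s^{\rm alt}_{\alg{u}(n)^{\bot}}$ in the relevant subclasses, as recorded in Proposition \ref{prop:contactinvariants}. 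The geometric reformulation in terms of the second fundamental form $B^{\eta}$ and integrability tensor $T^{\eta}$ of ${\rm ker}\eta$ comes from comparing with the codimension--one Walczak formulas of Proposition \ref{prop:Walczakcodim1}, applied to $\mathcal{D}={\rm ker}\eta$.

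For part (1): since $\xi\in\mathcal{D}_2$ the defining relations give $\xi_{\zeta}\zeta=0$ and ${\rm pr}_{{\rm ker}\eta}\chi=0$, so $\chi=({\rm div}\zeta)\zeta$ and $|\chi|^2=({\rm div}\zeta)^2$, while $\xi_XY=(\nabla_X\eta)Y\cdot\zeta=-(\nabla_XY)^{\bot}$ for $X,Y\in{\rm ker}\eta$. First I would identify $|\xi^{\rm alt}|^2-|\xi^{\rm sym}|^2$ with $|T^{\eta}|^2-|B^{\eta}|^2$: the $({\rm ker}\eta)\times({\rm ker}\eta)$--block of $\xi^{\rm alt}$ (resp.\ $\xi^{\rm sym}$) is $-T^{\eta}$ (resp.\ $-B^{\eta}$), and one checks that the mixed blocks $\xi_{\cdot}\zeta$ and $\xi_{\zeta}\cdot$ contribute equally to $|\xi^{\rm alt}|^2$ and to $|\xi^{\rm sym}|^2$, hence cancel in the difference. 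Substituting this, $|\chi|^2=({\rm div}\zeta)^2$ and Lemma \ref{lem:scalarcurvaturecompcontact} into \eqref{eq:divformain} yields the first displayed formula. The equivalent form then follows by subtracting the Walczak divergence formula of Proposition \ref{prop:Walczakcodim1} (which expresses ${\rm div}(-({\rm div}\zeta)\zeta+\nabla_{\zeta}\zeta)$ in terms of ${\rm Ric}(\zeta,\zeta),({\rm div}\zeta)^2,|T^{\eta}|^2,|B^{\eta}|^2$). On closed $M$, integrating the equivalent form and using $\int_M{\rm div}(\nabla_{\zeta}\zeta)\,{\rm vol}_M=0$ gives $\int_M{\rm Ric}(\zeta,\zeta)=\tfrac12\int_M(s-s^{\ast}-2s^{\rm alt}_{\alg{u}(n)^{\bot}})\,{\rm vol}_M$, while the integral Walczak formula \eqref{eq:Walczakforcodim1} supplies the second equality $\int_M{\rm Ric}(\zeta,\zeta)=\int_M(({\rm div}\zeta)^2+|T^{\eta}|^2-|B^{\eta}|^2)\,{\rm vol}_M$.

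Parts (2)--(4) are then short. For $\xi\in\mathcal{C}_6\oplus\dots\oplus\mathcal{C}_{11}\subset\mathcal{D}_2$, Proposition \ref{prop:contactinvariants} gives $\chi=0$, hence ${\rm div}\zeta=0$; plugging $\chi=0$ into the first formula of (1) leaves exactly the asserted identity of part (2). For $\xi\in\mathcal{C}_{11}$, Proposition \ref{prop:contactinvariants} additionally gives $|\xi^{\rm sym}|^2-|\xi^{\rm alt}|^2=0$ and $s^{\rm alt}_{\alg{u}(n)^{\bot}}=0$, so \eqref{eq:divformain} with $\chi=0$ forces $s_{\alg{u}(n)^{\bot}}=0$, i.e.\ $s-s^{\ast}=-2{\rm Ric}(\zeta,\zeta)$ by Lemma \ref{lem:scalarcurvaturecompcontact}; part (2) then gives $|T^{\eta}|^2-|B^{\eta}|^2=0$, and \eqref{eq:Walczakforcodim1} together with ${\rm div}\zeta=0$ yields $\int_M{\rm Ric}(\zeta,\zeta)=0$ and hence $\int_M(s-s^{\ast})=0$. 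For $\xi\in\mathcal{C}_{12}=\mathcal{D}_3$ the defining relations give $\chi=\xi_{\zeta}\zeta=-\nabla_{\zeta}\zeta$ and $|\chi|^2+|\xi^{\rm alt}|^2-|\xi^{\rm sym}|^2=0$, and Proposition \ref{prop:contactinvariants} gives $s^{\rm alt}_{\alg{u}(n)^{\bot}}=0$; \eqref{eq:divformain} with Lemma \ref{lem:scalarcurvaturecompcontact} then reads ${\rm div}(\nabla_{\zeta}\zeta)=\tfrac14(s-s^{\ast})+\tfrac12{\rm Ric}(\zeta,\zeta)$, and integrating over closed $M$ gives the last formula.

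The main obstacle is the one genuinely computational point in part (1): proving $|\xi^{\rm alt}|^2-|\xi^{\rm sym}|^2=|T^{\eta}|^2-|B^{\eta}|^2$ on $\mathcal{D}_2$, equivalently that the cross term $\sum_i g(\xi_{e_i}\zeta,\xi_{\zeta}e_i)$ vanishes. I expect this to follow either from the quadratic--invariant tables of \cite{CG} (this term equals the invariant $i_7$, which vanishes on $\mathcal{D}_2$) or directly: $\xi_{\zeta}$ restricted to ${\rm ker}\eta$ lies in $\alg{u}(n)^{\bot}$, while the part of $\nabla\zeta|_{{\rm ker}\eta}=-\xi_{\cdot}\zeta$ pairing against it is of type $\alg{u}(n)$, so the Killing--form pairing is zero. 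The remaining work---matching each of $\chi=0$, $\nabla_{\zeta}\zeta=0$ and $s^{\rm alt}_{\alg{u}(n)^{\bot}}=0$ to the correct subclass via Proposition \ref{prop:contactinvariants}, and the Stokes theorem steps---is routine.
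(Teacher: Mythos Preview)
Your approach is essentially the paper's: specialize the master formula \eqref{eq:divformain} using $\chi=(\mathrm{div}\,\zeta)\zeta$ on $\mathcal{D}_2$ (resp.\ $\chi=-\nabla_{\zeta}\zeta$ on $\mathcal{D}_3$), Lemma \ref{lem:scalarcurvaturecompcontact}, Proposition \ref{prop:contactinvariants}, and the codimension--one Walczak identities of Proposition \ref{prop:Walczakcodim1}; the paper likewise obtains $|\xi^{\rm alt}|^2-|\xi^{\rm sym}|^2=-i_8=|T^{\eta}|^2-|B^{\eta}|^2$ on $\mathcal{D}_2$ by citing the quadratic--invariant tables of \cite{CG}. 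One caution: your proposed \emph{direct} argument for the vanishing of the cross term $\sum_i g(\xi_{e_i}\zeta,\xi_{\zeta}e_i)$ is not quite right as written---the endomorphism $X\mapsto\xi_X\zeta=-\nabla_X\zeta$ of $\ker\eta$ is not in general skew, let alone in $\alg{u}(n)$---so you should rely on the invariant--table route (which is what the paper does) or argue via irreducibility that no invariant cross term between $\mathcal{C}_{11}$ and $\mathcal{C}_5\oplus\cdots\oplus\mathcal{C}_{10}$ can survive.
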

\begin{proof}
(1) Assume $\xi\in\mathcal{D}_2$. Then, for $X,Y\in{\rm ker}\eta$ we have
\begin{equation*}
\xi^{\rm alt}_XY=-T^{\eta}(X,Y),\quad \xi^{\rm sym}_XY=-B^{\eta}(X,Y).
\end{equation*}
Moreover, by classification of $\mathcal{D}_2$ by quadratic invariants \cite{CG}, we see that 
\begin{equation*}
|\xi^{\rm alt}|^2-|\xi^{\rm sym}|^2=-i_8=|T^{\eta}|^2-|B^{\eta}|^2.
\end{equation*}
Since $\chi=({\rm div}\zeta)\zeta$ using Proposition \ref{prop:Walczakcodim1} the divergence formula \eqref{eq:divformain} takes the first form. The second one and the integral formula follows again by Proposition \ref{prop:Walczakcodim1}.\\
(2) In this case, by Proposition \ref{prop:contactinvariants}, the characteristic vector field vanishes, i.e., ${\rm div}\zeta=0$. Thus, it suffices to apply the first part.\\
(3) This is an immediate consequence of (1), (2) and the fact that $|T^{\eta}|^2-|B^{\eta}|^2$ and $s^{\rm alt}_{\alg{u}(n)^{\bot}}$ vanish (by Proposition \ref{prop:contactinvariants}).\\
(4) By discussion concerning $\mathcal{D}_3$ class, we have $|\chi|^2+|\xi^{\rm alt}|^2-|\xi^{\rm sym}|^2=0$ and $\chi=-\nabla_{\zeta}\zeta$. Moreover, by Proposition \ref{prop:contactinvariants}, $s^{\rm alt}_{\alg{u}(n)^{\bot}}=0$. Hence, the divergence formula \eqref{eq:divformain} and the integral formula \eqref{eq:intformain} simplify to the desired ones. Notice, that we could use the formula \eqref{eq:divforinvariantscontact} and the fact that for $C_{12}$ class the only nonzero invariant is $i_{16}$ \cite[Table I]{CG}.
\end{proof}

At the end, consider the following Janssen--Vanhecke $\mathcal{C}(\alpha)$ condition \cite{JV}:
\begin{align*}
R(X,Y,Z,W) &=R(X,Y,\varphi Z,\varphi W)+\alpha(-g(X,Z)g(Y,W)+g(X,W)g(Y,Z)\\
&+g(X,\varphi Z)g(Y,\varphi W)-g(X,\varphi W)g(Y,\varphi Z)),
\end{align*}
where $\alpha$ is a smooth function. It implies that
\begin{equation}\label{eq:Calphacondition}
s-s^{\ast}=4n^2\alpha\quad \textrm{and}\quad {\rm Ric}(\zeta,\zeta)=2n\alpha.
\end{equation}

\begin{cor}\label{cor:curvatureconditionscontact}
In the intrinsic torsion of an almost contact metric structure belongs to $\mathcal{D}_2$ class and satisfies Janssen--Vanhecke $C(\alpha)$ condition, then
\begin{equation*}
\alpha=\frac{1}{n(n-1)}\left(\frac{1}{2}s^{\rm alt}_{\alg{u}(n)^{\bot}}-{\rm div}(\nabla_{\zeta}\zeta)\right).
\end{equation*}
\end{cor}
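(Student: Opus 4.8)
The plan is to combine the divergence identity for the $\mathcal{D}_2$ class established in Proposition~\ref{prop:contact1}(1) with the curvature consequences \eqref{eq:Calphacondition} of the Janssen--Vanhecke condition, and then solve algebraically for $\alpha$. No genuinely new computation is needed; the corollary is a rearrangement of facts already proved above.

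First I would recall from Proposition~\ref{prop:contact1}(1) that, since $\xi\in\mathcal{D}_2$,
\begin{equation*}
{\rm div}(\nabla_{\zeta}\zeta)=\frac{1}{2}s^{\rm alt}_{\alg{u}(n)^{\bot}}+\frac{1}{2}{\rm Ric}(\zeta,\zeta)-\frac{1}{4}(s-s^{\ast}).
\end{equation*}
Next I would invoke the $\mathcal{C}(\alpha)$ condition in the form recorded in \eqref{eq:Calphacondition}, namely $s-s^{\ast}=4n^{2}\alpha$ (obtained by tracing the curvature identity twice) and ${\rm Ric}(\zeta,\zeta)=2n\alpha$ (obtained by contracting it with $\zeta$). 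Substituting both into the displayed formula gives
\begin{equation*}
{\rm div}(\nabla_{\zeta}\zeta)=\frac{1}{2}s^{\rm alt}_{\alg{u}(n)^{\bot}}+n\alpha-n^{2}\alpha=\frac{1}{2}s^{\rm alt}_{\alg{u}(n)^{\bot}}-n(n-1)\alpha,
\end{equation*}
and solving for $\alpha$ yields the asserted identity.

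There is essentially no obstacle; the only point to watch is the bookkeeping of constants, since $\dim M=2n+1$ forces the $2n$ and $4n^{2}$ in \eqref{eq:Calphacondition} while the final coefficient collapses to $n(n-1)$, so one should make sure the division by $n(n-1)$ is legitimate, i.e.\ $n\geq 2$ (for $n=1$ the $\mathcal{C}_1,\dots,\mathcal{C}_{12}$ decomposition and the $\mathcal{D}_2$ class degenerate, so the hypothesis is vacuous or trivial there). It may also be worth remarking that $s^{\rm alt}_{\alg{u}(n)^{\bot}}$ does not vanish for a general $\mathcal{D}_2$ structure---by Proposition~\ref{prop:contactinvariants} it vanishes only on $\mathcal{C}_7\oplus\mathcal{C}_9\oplus\mathcal{C}_{10}\oplus\mathcal{C}_{11}$ within $\mathcal{D}_2$---so the term genuinely enters the formula, and that if $M$ is closed one obtains by integration $\int_M\alpha\,{\rm vol}_M=\frac{1}{2n(n-1)}\int_M s^{\rm alt}_{\alg{u}(n)^{\bot}}\,{\rm vol}_M$.
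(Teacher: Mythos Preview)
Your proposal is correct and is exactly the paper's approach: the paper's proof consists of the single line ``Follows immediately by Proposition~\ref{prop:contact1}(1)'', and you have simply written out the substitution of \eqref{eq:Calphacondition} into that divergence identity and the ensuing algebra.
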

\begin{proof}
Follows immediately by Proposition \ref{prop:contact1}(1).
\end{proof}

\begin{rem}\label{rem:finalremcontact}
In an analogous way as for $U(n)$--structures, it can be shown, with a little bit more effort, that the integral formula \eqref{eq:intformain} in this case is equivalent with the integral formula obtained in \cite{GMC2}.
\end{rem}

\section{Examples}

In thus section we apply obtained results to certain almost Hermitian and almost contact metric structures. First examples are simple illustration of obtained results for almost contact metric structures. We deal with this structure only since it has not been investigated from this point of view elsewhere. Almost Hermitian case is, due to well known facts contained in Proposition \ref{prop:curvaturerelations}, well understood from this perspective. In the end, we focus on more involving examples concerning homogeneous spaces, where we treat both cases -- almost Hermitian and almost contact metric structures. 

\begin{exa}
Let $(M,g,\varphi,\eta,\zeta)$ be an almost contact metric structure which is Sasaki, i.e., $\varphi$ satisfies the relation
\begin{equation*}
(\nabla_X\varphi)Y=g(X,Y)\xi-\eta(Y)X,\quad \nabla_X\zeta=-\varphi(X).
\end{equation*}
Thus, by \eqref{eq:inttorcontact}, the intrinsic torsion equals
\begin{equation*}
\xi_XY=g(X,\varphi(Y))\zeta,
\end{equation*}
or, in matrix notation
\begin{equation*}
\xi_X=\left( \begin{array}{cc} 0 & \varphi(X) \\ -\varphi(X)^{\top} & 0 \end{array} \right)\in\alg{u}(n)^{\bot}(TM)\quad\textrm{for $X\in{\rm ker}\eta$}
\end{equation*}
and $\xi_{\zeta}=0$. Hence we arrive in the $\mathcal{C}_5$ class. From the proof of Lemma \ref{lem:scalarcurvaturecompcontact} or, directly, by Proposition \ref{prop:contactinvariants} we see that $s_{\alg{u}(n)^{\bot}}^{\rm alt}$ does not vanish. In fact, by \cite[Table 1]{CG} we have
\begin{equation*}
i_6=-i_8=-i_{12}=\frac{1}{2n}i_{14}.
\end{equation*}
Thus, by the definition of $i_6$ and Lemma \ref{lem:scalarcurvaturecompcontact}
\begin{equation*}
s_{\alg{u}(n)^{\bot}}^{\rm alt}=(n-1)i_6=2n(n-1).
\end{equation*}
It can be shown \cite{JV} that the curvature tensor of a Sasakian structure satisfies $C(1)$ condition. Thus by \eqref{eq:Calphacondition}, $s-s^{\ast}=4n^2$ and ${\rm Ric}(\zeta,\zeta)=2n$. Since $\nabla_{\zeta}\zeta=-\varphi(\zeta)=0$, it follows that the left hand side of the second divergence formula in Proposition \ref{prop:contact1}(1) vanishes, whereas the right hand side equals
\begin{equation*}
\frac{1}{2}s^{\rm alt}_{\alg{u}(n)^{\bot}}-\frac{1}{4}(s-s^{\ast})+\frac{1}{2}{\rm Ric}(\zeta,\zeta)=n(n-1)-n^2+n=0.
\end{equation*} 
\end{exa}

\begin{exa}
Let $(M,g,\varphi,\eta,\zeta)$ be an almost contact metric structure which is Kenmotsu, i.e., $\varphi$ satisfies the following condition
\begin{equation*}
(\nabla_X\varphi)Y=g(\varphi X,Y)\zeta-\eta(Y)\varphi(X).
\end{equation*}
Hence, the intrinsic torsion, by the formula \eqref{eq:inttorcontact}, equals
\begin{equation*}
\xi_XY=g(X,Y)\zeta,\quad \xi_X \zeta=-X,\quad \xi_{\zeta}Y=0,\quad\xi_{\zeta}\zeta=0,
\end{equation*}
where $X,Y\in{\rm ker}\eta$. In matrix notation, for $X\in{\rm ker}\eta$,
\begin{equation*}
\xi_X=\left( \begin{array}{cc} 0 & X \\ -X^{\top} & 0 \end{array} \right)\in\alg{u}(n)^{\bot}(TM).
\end{equation*}
Hence, arguing as in the example above, $s^{\rm alt}_{\alg{u}(n)^{\bot}}$ does not vanish. Since, in this case,
\begin{equation*}
i_6=i_8=i_{12}=\frac{1}{2n}i_{10},
\end{equation*}
by Lemma \ref{lem:scalarcurvaturecompcontact},
\begin{equation*}
s^{\rm alt}_{\alg{u}(n)^{\bot}}=(1-n)i_6=2n(1-n).
\end{equation*}
It can be shown that Kenmotsu structure satisfies $C(-1)$ condition \cite{JV}. By \eqref{eq:Calphacondition}, $s-s^{\ast}=-4n^2$ and ${\rm Ric}(\zeta,\zeta)=-2n$. Therefore
\begin{equation*}
\frac{1}{2}s^{\rm alt}_{\alg{u}(n)^{\bot}}-\frac{1}{4}(s-s^{\ast})+\frac{1}{2}{\rm Ric}(\zeta,\zeta)=n(1-n)+n^2-n=0.
\end{equation*} 
Since for a Kenmotsu manifold $\nabla_X\zeta=X-\eta(X)\zeta$ it follows that
\begin{equation*}
{\rm div}\zeta=2n\quad\textrm{and}\quad \nabla_{\zeta}\zeta=0.
\end{equation*}
It follows that the second divergence formula in Proposition \ref{prop:contact1} is justified. 

Let us now justify condition (2) in Proposition \ref{prop:contact1}. It is known that the distribution ${\rm ker}\eta$ is integrable and umbilical \cite{KP}. Hence $T^{\eta}=0$ and $B^{\eta}=-\frac{1}{2n}g\otimes({\rm div}\zeta)\zeta$, which implies
\begin{equation*}
|T^{\eta}|^2-|B^{\eta}|^2=-2n
\end{equation*}
and
\begin{equation*}
\frac{1}{4}(s-s^{\ast})+\frac{1}{2}{\rm Ric}(\zeta,\zeta)-\frac{1}{2}s^{\rm alt}_{\alg{u}(n)^{\bot}}=-n^2-n-n(1-n)=-2n.
\end{equation*}
\end{exa}

\begin{exa}
Consider a generalized Heisenberg group $H(1,n)$ \cite{GMC2}. This is a Lie group consisting of square $n+2$ by $n+2$ martices of the form
\begin{equation*}
g=\left(\begin{array}{ccc}
I_n  & A^t & B^t \\ 0 & 1 & c \\ 0 & 0 & 1
\end{array}\right),
\end{equation*}
where $A=(a_1,\ldots,a_n)$ and $B=(b_1,\ldots,b_n)$ are elements of $\mathbb{R}^n$ and $c\in\mathbb{R}$. Then $H(1,n)$ is nilpotent of dimension $2n+1$. $H(1,n)$ has a global coordinate system $(x^i,x^{n+1},z)$, $i=1,2,\ldots,n$ given by
\begin{equation}\label{eq:exacontactheisenbergbasis}
x^i(g)=a_i,\quad x^{n+i}(g)=b_i,\quad z(g)=c.
\end{equation}
We choose a Riemannian metric such that the left--invariant basis
\begin{equation*}
X_i=\frac{\partial}{\partial x^i},\quad Y_i=\frac{\partial}{\partial x^{n+i}},\quad Z=\frac{\partial}{\partial z}+\sum_i x^i\frac{\partial}{\partial x^{n+i}}
\end{equation*}
is an orthonormal one. Nonzero components of the Levi--Civita connection are given by \cite{GMC2}
\begin{align*}
& \nabla_{X_i}X_{n+i}=\nabla_{X_{n+i}}X_i=-\frac{1}{2}Z,\\
& \nabla_{X_i}Z=-\nabla_ZX_i=\frac{1}{2}X_{n+1},\\
& \nabla_{X_{n+i}}Z=\nabla_ZX_{n+i}=\frac{1}{2}X_i.
\end{align*}
Thus, the integrability tensor $T^{\eta}$ vanishes and nonzero components of the the second fundamental form are equal  
\begin{equation*}
B^{\eta}(X_i,X_{n+i})=B^{\eta}(X_{n+i},X_i)=-\frac{1}{2}Z.
\end{equation*}
Hence, $|B^{\eta}|^2=\frac{n}{2}$. Moreover, by the formula for the curvature tensor \cite{GMC2}
\begin{align*}
& R(X_i,X_j,X_{n+i},X_{n+j})=\frac{1}{4}\quad i\neq j, && R(X_i,X_{n+j},X_j,X_{n+i})=-\frac{1}{4},\\
& R(X_i,Z,X_i,Z)=\frac{3}{4}, && R(X_{n+i},Z,X_{n+i},Z)=-\frac{1}{4}
\end{align*}
with remaining components vanishing, we see that
\begin{equation*}
s=-\frac{n}{2},\quad {\rm Ric}(Z,Z)=-\frac{n}{2}.
\end{equation*}

Consider on $H(1,n)$ an almost contact structure induced by the Reeb field $\zeta=Z$ and a dual one form $\eta=dz$ with a compatible endomorphism $\varphi$ such that $\varphi(X_i)=X_{n+i}$, $\varphi(X_{n+1})=-X_i$. Then we get a structure in a $\mathcal{C}_9$ class \cite{GMC2}. We have
\begin{equation*}
s^{\ast}=\frac{n}{2}.
\end{equation*}
It is easy to see that ${\rm div}Z=0$, hence the characteristic vector field vanishes (as noticed in Proposition \ref{prop:contactinvariants}(1)). We are ready to compute both sides of the divergence formula in Proposition \ref{prop:contact1}(2). The left hand side, clearly, equals $-\frac{n}{2}$. Since by Proposition \ref{prop:contactinvariants}(3), $s^{\rm alt}_{\alg{u}(n)^{\bot}}=0$, the right hand side is equal to
\begin{equation*}
\frac{1}{4}(s-s^{\ast})+\frac{1}{2}{\rm Ric}(\zeta,\zeta)=-\frac{n}{4}-\frac{n}{4}=-\frac{n}{2}.
\end{equation*}
\end{exa}

\subsection{Examples on reductive homogeneous spaces}
We will show that for a certain choice of $G$--structures on reductive homogeneous spaces induced from one parameter deformations of invariant Riemannian metrics, the characteristic vector field $\chi$ vanishes, hence, the divergence formula becomes point--wise formula. We justify this stating appropriate examples. We closely follow \cite[p. 140]{BFGK} and \cite{IA0}.

Let $K$ be a connected, compact Lie group and $H$ its closed, connected Lie subgroup. The quotient $K/H$ is a homogeneous space denoted by $M$. Assume additionally, that on the level of Lie algebras $\alg{k}=\alg{h}\oplus\alg{m}$, where $\alg{m}$ is the orthogonal complement with respect to some ${\rm ad}(H)$--invariant bilinear form ${\bf B}$  on $\alg{k}$. Deform ${\bf B}$ in the following way: assume $\alg{m}=\alg{m}_0\oplus\alg{m}_1$, where
\begin{equation}\label{eq:homogrel}
\begin{split}
&[\alg{h},\alg{m}_0]=\alg{m}_0,\quad [\alg{m}_0,\alg{m}_0]\subset\alg{h}\oplus\alg{m}_1,\\
&[\alg{h},\alg{m}_1]\subset\alg{m}_1,\quad [\alg{m}_1,\alg{m}_1]\subset\alg{h},\quad [\alg{m}_0,\alg{m}_1]\subset\alg{m}_0.
\end{split}
\end{equation}
For any $t>0$ we put
\begin{equation*}
{\bf B}_t={\bf B}|_{\alg{m}_0\times\alg{m}_0}+2t{\bf B}|_{\alg{m}_1\times\alg{m}_1}.
\end{equation*}
Form ${\bf B}_t$ defines an invariant riemannian metric $g_t$ on $M$. We will often write $g$ instead of $g_t$, if there is no confusion. The Levi--Civita connection of $g$ may be described as a linear map $\Lambda:\alg{m}\to\alg{so}(\alg{m})$ defined as follows \cite{BFGK}
\begin{align*}
\Lambda(X)Y &=\frac{1}{2}[X,Y]_{\alg{m}_1},\\
\Lambda(X)B &=t[X,B],\\
\Lambda(A)Y &=(1-t)[A,Y],\\
\Lambda(A)B &=0,
\end{align*}
where $X,Y\in\alg{m}_0$, $A,B\in\alg{m}_1$. 

Now, we consider a $G$--structure on $M$. Thus we have a decomposition $\alg{so}(\alg{m})=\alg{g}\oplus\alg{g}^{\bot}$, where we take orthogonal complement with respect to the Killing form on $\alg{so}(\alg{m})$. Then $\Lambda$ splits as $\Lambda=\Lambda_{\alg{g}}+\Lambda_{\alg{g}^{\bot}}$. $\Lambda_{\alg{g}}$ defines a $G$--connection $\nabla^G$, whereas $\Lambda_{\alg{g}^{\bot}}$ corresponds to the intrinsic torsion $\xi$.  

In the following two examples we introduce a $G$--structure via the same procedure. Denote by $x_0$ the coset $eH$, and let ${\rm ad}:H\to{\rm SO}(\alg{m})$ be the isotropy representation. Let $\varphi:\alg{m}\to\alg{m}$ be a linear map, which intertwines the isotropy representation. Since all tensor bundles on $M$ are associated bundles to the bundle $G\mapsto M$ with respect to the isotropy representation, it follows that $\varphi$ induces $(1,1)$--tensor field, in our case, almost hermitial or almost contact metric structure.

\begin{exa}
We follow \cite[p. 142]{BFGK}. Consider a complex flag manifold $F_{1,2}$ consisting of pairs $(l,V)$, where $l$ is $1$--dimensional complex subspace and $V$ is a complex $2$--dimensional subspace containing $l$ in $\mathbb{C}^3$. $U(3)$ acts transitively with a isotropy subgroup $H=U(1)\times U(1)\times U(1)$. Thus $F_{1,2}$ is a homogeneous space. On the level of Lie algebras $\alg{u}(n)=\alg{h}\oplus\alg{m}$, where $\alg{h}$ consists of diagonal matrices, whereas is a subspace of the form
\begin{equation*}
\alg{m}=\left\{\left(\begin{array}{ccc} 0 & a & b \\ -\bar{a} & 0 & c \\ -\bar{b} & -\bar{c} \end{array}\right),\quad a,b,c\in\mathbb{C}\right\}. 
\end{equation*}
$\alg{m}$ splits into two subspaces $\alg{m}_0$ and $\alg{m}_1$ given, respectively, by relations $c=0$ and $a=b=0$. Denote by ${\bf B}$ the Killing form on $\alg{u}(3)$, ${\bf B}(X,Y)=\frac{1}{2}{\rm Re}({\rm tr}XY))$. The inner product on $\alg{m}$ given by
\begin{equation*}
-{\bf B}|_{\alg{m}_0\times\alg{m}_0}+2t(-{\bf B})|_{\alg{m}_1\times\alg{m}_1}
\end{equation*}
defines a one parameter family of Riemannian metrics on $F_{1,2}$. The following basis is orthonormal with respect to the given inner product on $\alg{m}$:
\begin{equation*}
E_1=e_{12},\quad E_2=s_{12},\quad E_3=e_{13},\quad E_4=s_{13},\quad E_5=\frac{1}{\sqrt{2t}}e_{23},\quad E_6=\frac{1}{\sqrt{2t}}s_{23},
\end{equation*} 
where $e_{jk}$ is a skew--symmetric matrix with the $(j,k)$ entry equal to $1$ and $s_{jk}$ is a symmetric matrix with the $(j,k)$--entry equal to $i$ (and remaining elements except for $(k,j)$--entry equal to zero). We see that $\alg{m}_0$ is spanned by $E_1,E_2,E_3,E_4$, whereas $\alg{m}_1$ by $E_5,E_6$. It can be verified that the relations \eqref{eq:homogrel} hold.

Let us define an almost Hermitian structure on $F_{1,2}$. The isotropy representation ${\rm Ad}:H\to {\rm SO}(\alg{m})={\rm SO}(6)$ equals
\begin{equation*}
{\rm Ad}(t,r,s)=\left(\begin{array}{ccc} R_{t-s} & 0 & 0 \\ 0 & R_{t-r} & 0 \\ 0 & 0 & R_{s-r}\end{array}\right),
\end{equation*}
where $(t,r,s)$ denotes an element ${\rm diag}(e^{it},e^{is},e^{ir})\in H$ and $R_{\theta}$ is a rotation in $\mathbb{R}^2$ through an angle $\theta$. In order to define an almost Hermitian structure it suffices to define isotropy invariant $(1,1)$--tensor $J_0$ in $\alg{m}$ with $J_0^2=-1$. Let
\begin{equation*}
 J_0(E_1)=-E_2,\quad J_0(E_3)=E_4,\quad J_0(E_5)=-E_6.
\end{equation*}
The Levi--Civita connection of this almost Hermitian structure can be described by a map $\Lambda:\alg{m}\to\alg{so}(\alg{m})$,
\begin{align*}
&\Lambda(E_1)=\frac{\sqrt{t}}{\sqrt{2}}(e_{35}+e_{46}), &&\Lambda(E_2)=\frac{\sqrt{t}}{\sqrt{2}}(e_{45}-e_{36}),\\
&\Lambda(E_3)=\frac{\sqrt{t}}{\sqrt{2}}(e_{26}-e_{15}), &&\Lambda(E_4)=-\frac{\sqrt{t}}{\sqrt{2}}(e_{16}+e_{25}),\\
&\Lambda(E_5)=\frac{1-t}{\sqrt{2t}}(e_{13}+e_{24}), &&\Lambda(E_6)=\frac{1-t}{\sqrt{2t}}(e_{14}-e_{23}).
\end{align*}
The curvature tensor $R$ is given by
\begin{equation*}
R(X,Y)=[\Lambda(X),\Lambda(Y)]-\Lambda([X,Y]_{\alg{m}})-{\rm Ad}([X,Y]_{\alg{h}}),
\end{equation*}
where ${\rm Ad}:\alg{h}\to\alg{so}(\alg{m})$ denotes the differential of the isotropy representation,
\begin{equation*}
{\rm Ad}(H_1)=-e_{12}-e_{34},\quad {\rm Ad}(H_2)=e_{12}-e_{56},\quad {\rm Ad}(H_3)=e_{34}+e_{56}.
\end{equation*}
Here $H_k$ denotes the matrix $\frac{1}{2}s_{kk}$. 

Now we are ready to compute $s_{\alg{u}(3)^{\bot}}$. Firstly, notice that the commutators in $\alg{m}$ and its components in $\alg{h}$ are
\begin{align*}
&[E_1,E_2]=2H_1-2H_2, &&[E_1,E_3]=-\sqrt{2t}E_5, &&[E_1,E_4]=-\sqrt{2t}E_6,\\
&[E_1,E_5]=\frac{1}{\sqrt{2t}}E_3, &&[E_1,E_6]=\frac{1}{\sqrt{2t}}E_4, &&[E_2,E_3]=\sqrt{2t}E_6,\\
&[E_2,E_4]=-\sqrt{2t}E_5, &&[E_2,E_5]=\frac{1}{\sqrt{2t}}E_4, &&[E_2,E_6]=-\frac{1}{\sqrt{2t}}E_3,\\
&[E_3,E_4]=2H_1-2H_3, &&[E_3,E_5]=-\frac{1}{\sqrt{2t}}E_1, &&[E_3,E_6]=\frac{1}{\sqrt{2t}}E_2,\\
&[E_4,E_5]=-\frac{1}{\sqrt{2t}}E_2, &&[E_4,E_6]=-\frac{1}{\sqrt{2t}}E_1, &&[E_5,E_6]=\frac{1}{t}H_2-\frac{1}{t}H_3.
\end{align*}
Thus
\begin{align*}
&R(E_1,E_2)=(2-t)e_{34}+(t-2)e_{56}+4e_{12}, &&R(E_1,E_3)=(1-\frac{3}{2}t)e_{13}+(1-\frac{t}{2})e_{24},\\
&R(E_1,E_4)=(1-\frac{3}{2}t)e_{14}+(\frac{t}{2}-1)e_{23}, &&R(E_1,E_5)=(1-\frac{t}{2})e_{15}-\frac{t}{2}e_{26},\\
&R(E_1,E_6)=\frac{t}{2}e_{16}+(1-\frac{t}{2})e_{25}, &&R(E_2,E_3)=(\frac{t}{2}-1)e_{14}+(1-\frac{3}{2}t)e_{23},\\
&R(E_2,E_4)=(1-\frac{t}{2})e_{13}+(1-\frac{3}{2}t)e_{24}, &&R(E_2,E_5)=(1-\frac{t}{2})e_{16}+\frac{t}{2}e_{15},\\
&R(E_2,E_6)=(\frac{t}{2}-1)e_{15}+\frac{t}{2}e_{26}, &&R(E_3,E_4)=(2-t)e_{12}+4e_{34}-(2-t)e_{56},\\
&R(E_3,E_5)=\frac{t}{2}e_{35}+(1-\frac{t}{2})e_{46}, &&R(E_3,E_6)=\frac{t}{2}e_{36}+(\frac{t}{2}-1)e_{45},\\
&R(E_4,E_5)=(\frac{t}{2}-1)e_{36}+\frac{t}{2}e_{45}, &&R(E_4,E_6)=(1-\frac{t}{2})e_{35}+\frac{t}{2}e_{46},\\
&R(E_5,E_6)=(t-2)e_{12}+(2-t)e_{34}+\frac{2}{t}e_{56}.
\end{align*}
Moreover, notice, that $e_{12},e_{34},e_{56}\in \alg{u}(3)$. For the remaining elements, its projection to $\alg{u}(3)^{\bot}$ equals, respectively,
\begin{align*}
&e_{13}\mapsto\frac{1}{2}(e_{13}+e_{24}), &&e_{14}\mapsto\frac{1}{2}(e_{14}-e_{23}), &&e_{15}\mapsto\frac{1}{2}(e_{15}-e_{26}), \\
&e_{16}\mapsto\frac{1}{2}(e_{16}+e_{25}), &&e_{23}\mapsto\frac{1}{2}(e_{23}-e_{14}), &&e_{24}\mapsto\frac{1}{2}(e_{24}+e_{13}),\\
&e_{25}\mapsto\frac{1}{2}(e_{25}+e_{16}), &&e_{26}\mapsto\frac{1}{2}(e_{26}-e_{15}), &&e_{35}\mapsto\frac{1}{2}(e_{35}+e_{46}),\\
&e_{36}\mapsto\frac{1}{2}(e_{36}-e_{45}), &&e_{45}\mapsto\frac{1}{2}(e_{45}-e_{36}), &&e_{46}\mapsto\frac{1}{2}(e_{46}+e_{35}).
\end{align*} 
Hence
\begin{equation*}
s_{\alg{u}(3)^{\bot}}=8(2-t).
\end{equation*}
We could obtain above relation by applying the formula \eqref{eq:sunbot}. It is easy to see that $s=2(-13+3t-\frac{2}{t})$ and $s^{\ast}=2(3-5t-\frac{2}{t})$.

Let us turn to computations of the intrinsic torsion and its components. By above relations we see that $\Lambda:\alg{m}\to\alg{u}(3)^{\bot}$. Hence, the minimal connection $\nabla^{U(3)}$ is induced by a zero map and the intrinsic torsion corresponds to $\Lambda$. Since $\Lambda(E_i)E_i=0$ it follows that the characteristic vector field $\chi$ vanishes. Moreover,
\begin{equation*}
\sum_{i,j}\skal{\Lambda(E_i)E_j}{\Lambda(E_j)E_i)}=4(t-2)
\end{equation*}
corresponds to $|\xi^{\rm sym}|^2-|\xi^{\rm alt}|^2$, hence the main divergence formula, which reduces to $\frac{1}{2}s_{\alg{u}(3)^{\bot}}=|\xi^{\rm alt}|^2-|\xi^{\rm sym}|^2=4(2-t)$, is justified.

Let us look at the Gray--Hervela classes induced by $t$ for each choice of $t>0$. Since $\chi=0$, the considered almost Hermitian structure is of type $\mathcal{W}_1\oplus\mathcal{W}_2\oplus\mathcal{W}_3$. Simple calculations show that $\Lambda$, hence $\xi$, satisfies $\xi_{JX}JY=-\xi_XY$. Thus, by Proposition \ref{prop:GHclassesproperties}, the considered structures are of type $\mathcal{W}_1\oplus\mathcal{W}_2$. Moreover, it is nearly K\"ahler, i.e. in $\mathcal{W}_1$, if and only if $t=\frac{1}{2}$. By above considerations, we see that for $t<2$, $s_{\alg{u}(3)^{\bot}}>0$ and for $t>2$, $s_{\alg{u}(3)^{\bot}}<0$.
\end{exa}

\begin{exa}
We follow very closely the approach by Agricola \cite{IA0}. Consider the $5$--dimensional Steifel manifolds $V_{4,2}=SO(4)/SO(2)$. We embed $S(2)$ as a lower diagonal block. We have the splitting $\alg{so}(4)=\alg{so}(2)\oplus\alg{m}$ with respect to the Killing form ${\bf B}$, where
\begin{equation*}
\alg{m}=\left\{\left(\begin{array}{cc} A & X \\ -X^{\top} & 0 \end{array}\right)\mid
A=\left(\begin{array}{cc} 0 & -a \\ a & 0 \end{array}\right),\quad X\in\mathcal{M}_{2\times 2}(\mathbb{R})
\right\}.
\end{equation*} 

There is a one parameter family $g_t$ of Riemannian metrics on $V_{4,2}$ constructed by Jensen \cite{GJ}, which are obtained from the invariant dot product on $\alg{m}$
\begin{equation*}
\skal{(a,X)}{(b,Y)}={\bf B}(X,Y)+2t\,ab,
\end{equation*}
where $(a,X),(b,Y)$ denote the elements in $\alg{m}$ and ${\bf B}(X,Y)=\frac{1}{2}{\rm tr}(X^{\top}Y)$.
Denoting the canonical basis in $\alg{so}(4)$ by $(e_{ij})$, i.e., $e_{ij}$ is a skew symmetric matrix with the $(i,j)$--entry equal to $-1$ (be aware of the difference with the sign convention comparing to the previous example), we have an orthonormal basis 
\begin{equation*}
E_1=e_{13},\quad E_2=e_{14},\quad E_3=e_{23},\quad E_4=e_{24},\quad E_5=\frac{1}{\sqrt{2t}}e_{12}
\end{equation*}
in $\alg{m}$. We have the following Lie bracket relations
\begin{align*}
&[E_1,E_2]=e_{34}, &&[E_1,E_3]=\sqrt{2t}E_5, &&[E_1,E_4]=0,\\
&[E_1,E_5]=-\frac{1}{\sqrt{2t}}E_3, &&[E_2,E_3]=0, &&[E_2,E_4]=\sqrt{2t}E_5,\\
&[E_2,E_5]=-\frac{1}{\sqrt{2t}}E_4, &&[E_3,E_4]=e_{34}, &&[E_3,E_5]=\frac{1}{\sqrt{2t}}E_1,\\
&[E_4,E_5]=\frac{1}{\sqrt{2t}}E_2.
\end{align*}
Two brackets, $[E_1,E_2]$ and $[E_3,E_4]$, have $\alg{so}(2)$-components but no $\alg{m}$--components. Denoting by $\alg{m}_0$ the subspace spanned by $E_1,E_2,E_3,E_4$ and by $\alg{m}_1$ the one--dimensional space spanned by $E_5$, we see that relations \eqref{eq:homogrel} are satisfied. The Levi--Civita connection of this homogeneous space, is described by a map $\Lambda:\alg{m}\to\alg{so}(\alg{m})=\alg{so}(5)$ of the form \cite{GJ,IA0} 
\begin{align*}
&\Lambda(E_1)=\sqrt{\frac{t}{2}}e_{35}, &&\Lambda(E_2)=\sqrt{\frac{t}{2}}e_{45}, &&\Lambda(e_3)=-\sqrt{\frac{t}{2}}e_{15},\\
&\Lambda(E_4)=-\sqrt{\frac{t}{2}}e_{25}, &&\Lambda(E_5)=\frac{1-t}{\sqrt{2t}}(e_{13}+e_{24}).
\end{align*}
The curvature tensor is then given by
\begin{equation*}
R(X,Y)=[\Lambda(X),\Lambda(Y)]-\Lambda([X,Y]_{\alg{m}})-{\rm Ad}([X,Y]_{\alg{h}}),
\end{equation*}
where ${\rm Ad}$ is the differential of the isotropy representation ${\rm Ad}:SO(2)\to SO(\alg{m})$,
\begin{equation*}
{\rm Ad}(g)=\left(\begin{array}{ccc} g & 0 & 0 \\ 0 & g & 0 \\ 0 & 0 & 0\end{array}\right),\quad g\in SO(2).
\end{equation*}
Hence, $R$ has the following components
\begin{align*}
&R(E_1,E_2)=-e_{12}+(\frac{t}{2}-1)e_{34}, && R(E_1,E_3)=(\frac{3}{2}t-1)e_{13}+(t-1)e_{24},\\
&R(E_1,E_4)=\frac{t}{2}e_{23}, &&R(E_1,E_5)=-\frac{t}{2}e_{15},\\
&R(E_2,E_3)=\frac{t}{2}e_{14}, &&R(E_2,E_4)=(t-1)e_{13}+(\frac{3}{2}t-1)e_{24},\\
&R(E_2,E_5)=-\frac{t}{2}e_{25}, &&R(E_3,E_4)=(\frac{t}{2}-1)e_{12}-e_{34},\\
&R(E_3,E_5)=-\frac{t}{2}e_{35}, &&R(E_4,E_5)=-\frac{t}{2}e_{45}.
\end{align*}

Now we introduce an almost contact structure on $V_{4,2}$ by defining an isotropy invariant map $\varphi:\alg{m}\to\alg{m}$, which is, in terms of the basis $(E_i)$, given by a matrix
\begin{equation*}
\varphi=\left(\begin{array}{ccccc}
0 & 0 & 1 & 0 & 0 \\
0 & 0 & 0 & 1 & 0 \\
-1 & 0 & 0 & 0 & 0 \\
0 & -1 & 0 & 0 & 0 \\
0 & 0 & 0 & 0 & 0 
\end{array}\right).
\end{equation*}
One can check that $\varphi$, in deed, defines an almost contact structure with the Reeb field $\zeta$ induced by $E_5$ and compatible with the metric $g_t$. Notice, that the fundamental form $F(X,Y)=g_t(X,\varphi(Y))$ induced by $\varphi$ if $F=e_{13}+e_{24}$, which is proportional to $dE_5$ making the structure just Sasaki structure \cite{IA0}.

To derive the formula for $s_{\alg{u}(2)^{\bot}}$ we need to compute the $\alg{u}(2)^{\bot}$--component of $R$, i.e., project $R(E_i,E_j)$ to the second factor with respect to the decomposition $\alg{so}(\alg{m})=\alg{u}(2)\oplus\alg{u}(2)^{\bot}$. It is easy to see that $e_{i5}\in\alg{u}(2)^{\bot}$ and $e_{13},e_{24}\in\alg{u}(2)$. Moreover, $e_{13}$ and $e_{24}$ project to $v=\frac{1}{2}(e_{13}-e_{24})$ and $-v$, respectively. Finally, $e_{12}$ projects to $w=\frac{1}{2}(e_{12}-e_{34})$, whereas $e_{34}$ projects to $-w$. Thus
\begin{equation*}
s_{\alg{u}(2)^{\bot}}=6t.
\end{equation*}
We could compute $s_{\alg{u}(2)^{\bot}}$ using Lemma \ref{lem:scalarcurvaturecompcontact} and noticing that
\begin{equation*}
s=2(4-t),\quad s^{\ast}=2(4-5t),\quad {\rm Ric}(\zeta,\zeta)=2t.
\end{equation*}

Let us describe the intrinsic torsion. We see that $\Lambda_{\alg{u}(3)^{\bot}}(E_i)=\Lambda(E_i)$ for $i=1,2,3,4$, and $\Lambda_{\alg{u}(3)^{\bot}}(E_5)=0$. Hence, the characteristic vector field $\chi$, which corresponds to $\sum_i \Lambda_{\alg{u}(2)^{\bot}}(E_i)E_i$, vanishes and, it is not hard to see, that $|\xi^{\rm alt}|^2-|\xi^{\rm sym}|^2$, which corresponds to $-\sum_{i,j}\skal{\Lambda_{\alg{u}(2)^{\bot}}(E_i)E_j}{\Lambda_{\alg{u}(2)^{\bot}}(E_j)E_i}$, equals $2t$. Finally, directly from the definition it is not hard to see that $s^{\rm alt}_{\alg{u}(2)^{\bot}}=2t$. Thus
\begin{equation*}
\frac{1}{2}s^{\rm alt}_{\alg{u}(2)^{\bot}}-\frac{1}{2}s_{\alg{u}(2)^{\bot}}+|\xi^{\rm alt}|^2-|\xi^{\rm sym}|^2=t-3t+2t=0.
\end{equation*}
Notice, finally, that by the classification of all possible intrinsic torsion modules \cite{CG}, since $i_2=|\xi^{\rm sym}|^2-|\xi^{\rm alt}|=-t<0$, we arrive in a pure $\mathcal{C}_1$ class.
\end{exa}


\begin{thebibliography}{10}
\bibitem{IA0} I. Agricola, Connections on naturally reductive spaces, their Dirac operator and homogeneous models in string theory, Comm. Math. Phys. 232 (2003), no. 3, 535--563. 
\bibitem{IA} I. Agricola, The Srni lectures on non-integrable geometries with torsion. Arch. Math. (Brno) 42 (2006), suppl., 5--84.
\bibitem{AFS} B. Alexandrov, T. Friedrich, N. Schoemann, Almost Hermitian 6-manifolds revisited, J. Geom. Phys. 53 (2005), no. 1, 1--30.
\bibitem{BFGK} H. Baum, T. Friedrich, R. Grunewald, Ralf, I. Kath, Twistors and Killing spinors on Riemannian manifolds, Teubner Texts in Mathematics, Stuttgart, 1991.
\bibitem{BGM} M. Berger, P. Gauduchon, E. Mazet, Le Spectre d'une Vari\'{e}t\'{e} Riemannienne, Lecture Notes in Mathematics, Springer-Verlag Berlin Heidelberg, 1977. 
\bibitem{BL} G. Bor, L. Hern\'{a}ndez Lamoneda, Bochner formulae for orthogonal G-structures on compact manifolds. Differential Geom. Appl. 15 (2001), no. 3, 265--286.
\bibitem{BL2} G. Bor, L. Hern\'{a}ndez Lamoneda, A Bochner formula for almost-quaternionic-Hermitian structures. Differential Geom. Appl. 21 (2004), no. 1, 79--92.
\bibitem{CG} D. Chinea and J. C. Gonz\'{a}lez-D\'{a}vila, A classification of almost contact metric manifolds, Annali di Matematica Pura ed Applicata (4) 156 (1990), 15--36.
\bibitem{CS} S. Chiossi, S. Salamon, The intrinsic torsion of SU(3) and G2 structures. Differential geometry, Valencia, 2001, 115--133, World Sci. Publ., River Edge, NJ, 2002. 
\bibitem{FF0} M. Falcitelli, A. Farinola, Curvature properties of the locally conformal K\"ahler manifolds, Rend. Math. 11 (1991), 495--521.
\bibitem{FF} M. Falcitelli, A. Farinola, Curvature properties of almost Hermitian manifolds, Riv. Mat. Uni. Parma (5) 3 (1994), 301--320.
\bibitem{FG} M. Fern\'{a}ndez, A. Gray, Riemannian manifolds with structure group G2. Ann. Mat. Pura Appl. (4) 132 (1982), 19--45 (1983).
\bibitem{GMC2} J. C. Gonz\'{a}lez-D\'{a}vila, F. Martin Cabrera, Harmonic almost contact structures via the intrinsic torsion, Israel J. Math. 181 (2011), 145--187.
\bibitem{AG} A. Gray, The structure of nearly Kähler manifolds. Math. Ann. 223 (1976), no. 3, 233--248.
\bibitem{GH} A. Gray, L. Hervella, The Sixteen Classes of Almost Hermitian Manifolds and Their Linear Invariants,  Ann. Mat. Pura Appl. (4) 123 (1980), 35--58.
\bibitem{AG2} A. Gray, Curvature identities for Hermitian and almost Hermitian manifolds, Tohoku Math. J. 28 (1976), 601--612.
\bibitem{GJ} G. Jensen, Imbeddings of Stiefel manifolds into Grassmannians, Duke Math. J. 42 (1975), no. 3, 397--407.
\bibitem{JV} D. Janssens, L. Vanhecke, Almost contact structures and curvature tensors, Kodai Math. J. 4 (1981), 1--27.
\bibitem{KP} T. Kim, H. Pak, Canonical foliations of certain classes of almost contact metric structures, Acta Math. Sin. (Engl. Ser.) 21, No. 4 (2005), 841--846.
\bibitem{SK} S. Koto, Some theorems on almost k\"ahlerian spaces, J. Math. Soc. Japan 12 (1960), 422--433.
\bibitem{FMC} F. Martin Cabrera, Special almost Hermitian geometry. J. Geom. Phys. 55 (2005), no. 4, 450--470.
\bibitem{AS} A. Swann, Aspects symplectiques de la g\'eom\'etrie quaternionique. C. R. Acad. Sci. Paris Sér. I Math. 308 (1989), no. 7, 225--228. 
\bibitem{IV} I. Vaisman, On Locally conformal almost K\"ahler manifolds, Israel J. Math. Vol 24 (1976), no. 3-4, 338--351. 
\bibitem{IV2} I. Vaisman, Some curvature properties of Locally conformal K\"ahler manifolds, Trans. Amer. Math. Soc. 259 (1980), no. 2, 439--447.
\bibitem{PW} P. Walczak,  An integral formula for a Riemannian manifold with two orthogonal complementary distributions. Colloq. Math. 58 (1990), no. 2, 243--252.
\end{thebibliography}
\end{document}